\DeclareSymbolFontAlphabet{\mathbb}{AMSb}
\DeclareSymbolFontAlphabet{\mathbbol}{bbold}
\theoremstyle{plain}
\newtheorem{theorem}{\normalfont\scshape Theorem}[section]
\newtheorem{proposition}{\normalfont\scshape Proposition}[section]
\newtheorem{lemma}[proposition]{\normalfont\scshape Lemma}
\newtheorem*{corollary*}{\normalfont\scshape Corollary}
\theoremstyle{remark}
\newtheorem*{remark*}{\normalfont\scshape Remark}
\newtheorem*{notation}{\normalfont\scshape Notation}
\numberwithin{equation}{section}
\renewcommand{\footnoterule}{
  \kern -3pt
  \hrule width 2.5in height 0.4pt
  \kern 3pt
}
\begin{document}
	
\title[ Almost primes of the form ${[}p^{1/\gamma}{]}$ ]
	  { Almost primes of the form $[p^{1/\gamma}]$ }

\author[Fei Xue, Jinjiang Li, Min Zhang]
       {Fei Xue \quad \& \quad Jinjiang Li \quad \& \quad Min Zhang}

\address{Department of Mathematics, China University of Mining and Technology,
         Beijing 100083, People's Republic of China}

\email{fei.xue.math@gmail.com}

\address{(Corresponding author) Department of Mathematics, China University of Mining and Technology,
         Beijing 100083, People's Republic of China}

\email{jinjiang.li.math@gmail.com}

\address{School of Applied Science, Beijing Information Science and Technology University,
		 Beijing 100192, People's Republic of China  }

\email{min.zhang.math@gmail.com}

\date{}

\footnotetext[1]{Jinjiang Li is the corresponding author. \\
  \quad\,\,
{\textbf{Keywords}}: Piatetski--Shapiro sequence; exponential sum; prime variable; almost--prime \\

\quad\,\,
{\textbf{MR(2020) Subject Classification}}: 11A41, 11N80, 11N36, 11L07

}

\begin{abstract}
Let $\mathcal{P}_r$ denote an almost--prime with at most $r$ prime factors, counted according to multiplicity.
In this paper, it is proved that, for $0.989<\gamma<1$, there exist infinitely many primes $p$ such that $[p^{1/\gamma}]=\mathcal{P}_7$, which constitutes an improvement upon the previous result of
Banks--Guo--Shparlinski \cite{Banks-Guo-Shparlinski-2016} who showed that there exist infinitely many primes $p$ such that $[p^{1/\gamma}]=\mathcal{P}_8$ for $\gamma$ near to one.

\end{abstract}

\maketitle

\section{Introduction and main result}
Let $\gamma\in(\frac{1}{2},1)$ be a fixed real number. The Piatetski--Shapiro sequences are sequences of the form
\begin{equation*}
 \mathscr{N}_{\gamma}:=\big\{[n^{1/\gamma}]:\,n\in \mathbb{N}^+\big\}.
\end{equation*}
Such sequences have been named in honor of Piatetski--Shapiro, who \cite{Piatetski-Shapiro-1953}, in
1953, proved that $\mathscr{N}_{\gamma}$ contains infinitely many primes provided that $\gamma\in(\frac{11}{12},1)$. The prime numbers of the form $p=[n^{1/\gamma}]$ are called \textit{Piatetski--Shapiro primes of type $\gamma$}. More precisely, for such $\gamma$ Piatetski--Shapiro \cite{Piatetski-Shapiro-1953} showed that the counting function
\begin{equation*}
 \pi_\gamma(x):=\#\big\{\textrm{prime}\,\, p\leqslant x:\,p=[n^{1/\gamma}]\,\,\textrm{for some}\,\,
 n\in\mathbb{N}^+ \big\}
\end{equation*}
satisfies the asymptotic property
\begin{equation*}
\pi_{\gamma}(x)=\frac{x^{\gamma}}{\log x}(1+o(1))
\end{equation*}
as $x\to\infty$. Since then, the range for $\gamma$ of the above asymptotic formula in which it is known that $\mathscr{N}_{\gamma}$ contains infinitely many primes has been enlarged many times (see the literatures \cite{Kolesnik-1967,Leitmann-1975,Leitmann-1980,Heath-Brown-1983,Kolesnik-1985,Liu-Rivat-1992,
Rivat-1992,Rivat-Sargos-2001}) over the years and is currently known to hold for all $\gamma\in(\frac{2426}{2817},1)$ thanks to Rivat and Sargos \cite{Rivat-Sargos-2001}. Rivat and Wu \cite{Rivat-Wu-2001} also showed that there exist infinitely many Piatetski--Shapiro primes for $\gamma\in(\frac{205}{243},1)$ by showing a lower bound of $\pi_\gamma(x)$ with the expected order of magnitude. We remark that if $\gamma>1$ then $\mathscr{N}_\gamma$ contains all natural numbers, and hence all primes, particularly. On the other hand, Deshouillers \cite{Deshouillers-1976} proved that $\pi_{\gamma}(x)\to\infty$ for almost all positive $\gamma\in(0,1)$ (in the sense of Lebesgue measure). However, this result provides no specific value of $\gamma$. Actually, the proof of Deshouillers \cite{Deshouillers-1976} gave a little more, i.e., for almost all positive $\gamma$
there holds
\begin{equation*}
\limsup_{x\to\infty}\frac{\pi_{\gamma}(x)}{x^\gamma/(\log x)}\geqslant1.
\end{equation*}
Many authors have studied arithmetic properties of Piatetski--Shapiro sequences (see Baker
et al. \cite{Baker-Banks-Brudern-Shparlinski-Weingartner-2013} and the references contained therein), and it is natural to ask whether certain properties also hold on special subsequences of the Piatetski--Shapiro sequences. Perhaps the most important of these are the subsequences of the form
\begin{equation*}
 \mathscr{M}_{\gamma}:=\big\{[p^{1/\gamma}]:\,\,p\,\,\textrm{is prime number} \big\}.
\end{equation*}
It is conjectured that the counting function
\begin{equation*}
 \Pi_\gamma(x):=\#\big\{\textrm{prime}\,\, p\leqslant x:\,p=[q^{1/\gamma}],\,\,\textrm{$q$ is prime} \big\}
\end{equation*}
satisfies the asymptotic property
\begin{equation}\label{double-PS-conjecture}
\Pi_{\gamma}(x)=\frac{x^{\gamma}}{\gamma\log^2 x}(1+o(1))
\end{equation}
as $x\to\infty$.  This is not trivial even for $\gamma>1$. Indeed, the problem of representing primes by linear
polynomials in prime variable is still not solved. One can therefore look on the investigation of (\ref{double-PS-conjecture}) for $\gamma>1$ as a progress towards the twin prime conjecture.

In 1987, Balog \cite{Balog-1987} essentially showed that for almost all $\gamma\in(0,1)$, the counting function
$\Pi_\gamma(x)$ satisfies
\begin{equation*}
\limsup_{x\to\infty}\frac{\Pi_{\gamma}(x)}{x^\gamma/(\gamma\log^2 x)}\geqslant1,
\end{equation*}
but this result gives no information for any specific choice of $\gamma$. In the same paper, Balog \cite{Balog-1987} also showed that
\begin{equation*}
\sum_{\substack{\mathcal{P}_s\leqslant x\\ [\mathcal{P}_r^{1/\gamma}]=\mathcal{P}_s}}1\gg\frac{x^\gamma}{\log^2x},
\end{equation*}
where $\{r,s\}$ satisfies the following table
\begin{center}
\begin{tabular}{|c|c|c|c|c|}
		\hline
		$r$ & $1$ & $9$ & $2$ & $5$  \\
        \hline
		$s$ & $9$ & $1$ & $5$ & $2$  \\
		\hline
	\end{tabular}
\end{center}
In 2016, Banks--Guo--Shparlinski \cite{Banks-Guo-Shparlinski-2016} generalized and enhanced the result of Balog \cite{Balog-1987}. They showed that for every $\gamma\in(0,1)\setminus\{1/n:\,n\in\mathbb{Z}^+\}$ there exist infinitely many members of $[p^{1/\gamma}]$ having at most $R(\gamma)$ prime factors, giving explicit estimates for $R(\gamma)$ when $\gamma$ is near one. To be specific, for each $R=8,9,\dots,19$, there exists explicit value $\gamma_R$ such that
\begin{equation*}
\big\{[p^{1/\gamma}]\leqslant x:\,\,\textrm{$p$ is prime},\,\,[p^{1/\gamma}]
  =\mathcal{P}_R\big\}\gg\frac{x^\gamma}{\log^2x}
\end{equation*}
holds for all sufficiently large $x$ provided that $\gamma\in(\gamma_R,1)$. When $\gamma$ is near one,  Banks--Guo--Shparlinski \cite{Banks-Guo-Shparlinski-2016} showed that $R=8$ and $\gamma_8=1/1.0521=0.950479\dots$.

In this paper, we shall improve the result of Banks--Guo--Shparlinski
\cite{Banks-Guo-Shparlinski-2016} and establish the following theorem.

\begin{theorem}\label{Theorem-1}
For $0.989<\gamma<1$, there exist infinitely many primes $p$ such that $[p^{1/\gamma}]=\mathcal{P}_7$. To be specific, the following estimates
\begin{equation}\label{Thm-ineq}
  \big\{[p^{1/\gamma}]\leqslant x:\,\,\textrm{$p$ is prime},\,\,[p^{1/\gamma}]
  =\mathcal{P}_7\big\}\gg\frac{x^\gamma}{\log^2x}
\end{equation}
holds for all sufficiently large $x$ provided that $\gamma\in(\gamma_7,1)$ with $\gamma_7=0.989$.
\end{theorem}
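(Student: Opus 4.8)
The plan is to detect almost-primes in the Piatetski--Shapiro set $\mathscr{M}_\gamma$ by a weighted sieve of Richert type, exactly in the spirit of Balog and of Banks--Guo--Shparlinski, but with the two inputs optimized more carefully. Write $\gamma$ close to $1$, fix $\theta=1/\gamma>1$, and note that $m=[p^{1/\gamma}]$ precisely when $m^\theta\le p<(m+1)^\theta$, equivalently when the characteristic function $\chi_\gamma(m):=\lfloor -m^\theta\rfloor-\lfloor -(m+1)^\theta\rfloor$ of the set $\mathscr{N}_\gamma$ is nonzero; thus the relevant counting function is $\sum_{p\le x}\chi_\gamma\big(\lfloor p^{1/\gamma}\rfloor\big)$-type, or better, we work on the sifting sequence $\mathcal{A}=\big\{n\le y:\ \text{$n$ has a prime in }[n^\theta,(n+1)^\theta)\big\}$ weighted by the number of such primes, where $y\asymp x^\gamma$. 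The first main step is to produce, for this sequence $\mathcal{A}$ and for squarefree $d$ up to a level $x^{\gamma-\varepsilon}$ times a suitable small power, a level-of-distribution estimate
\begin{equation}\label{level-dist-plan}
\sum_{d\le D}\Big|\,\#\{a\in\mathcal{A}:\ d\mid a\}-\frac{1}{\varphi(d)}\#\mathcal{A}\,\Big|\ll \frac{\#\mathcal{A}}{\log^{A}x},
\end{equation}
which is the analytic heart of the matter. This reduces, via the standard expansion $\chi_\gamma(m)=\theta m^{\theta-1}+\Delta(m)$ with $\Delta$ detected by Fourier expansion of the sawtooth function $\psi(t)=\{t\}-\tfrac12$, to bounding exponential sums $\sum_{h}\sum_{n\sim N} e\big(h n^\theta\big)$ twisted by a von Mangoldt (or its Vaughan/Heath-Brown identity pieces) over arithmetic progressions; here $\theta=1/\gamma$ is just barely larger than $1$, so the ``curvature'' $h\theta(\theta-1)n^{\theta-2}$ is tiny, and one must push the exponent-pair / van der Corput / Fouvry--Iwaniec style estimates to their limit --- this is exactly where the constraint $\gamma>0.989$ (rather than the weaker $\gamma>0.950$ of \cite{Banks-Guo-Shparlinski-2016}) comes from, and it is the step I expect to be the main obstacle, requiring a careful combination of a Type-I/Type-II decomposition with the best available bounds (e.g. those of Rivat--Sargos \cite{Rivat-Sargos-2001} or Robert--Sargos) for $\sum e(h n^{1/\gamma})$ in short ranges and on average over $h$ and over moduli.

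The second step is purely combinatorial sieve bookkeeping. Having \eqref{level-dist-plan} with level $D=y^{1-\delta}$ for an admissible $\delta=\delta(\gamma)\to 0$ as $\gamma\to1$, and the trivial bound $\#\mathcal{A}\asymp y/\log y$ together with the ``dimension one'' nature of the sieve problem (the singular series is $\prod_{p}(1-1/p)^{-1}(1-\omega(p)/p)$ with $\omega(p)=p/(p-1)$, a linear sieve), I would invoke Richert's weighted sieve in the form presented in Halberstam--Richert or in Greaves's book. The output of such a weighted sieve: if the level of distribution is $y^{\tau}$ with $\tau$ close to $1$, and we sieve out primes below $y^{1/u}$, then every surviving $a\in\mathcal{A}$ has at most $r$ prime factors where $r$ is essentially $\lfloor u/\tau \rfloor + O(1)$ after the weight optimization (the precise threshold involving the linear-sieve functions $F,f$ and the Richert weight parameter). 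Plugging $\tau\to1$ and optimizing the weight removal parameter yields $r=7$; this is the same mechanism that gave Banks--Guo--Shparlinski $r=8$, and the improvement by one comes from combining the slightly better level of distribution obtainable here (via the sharper exponential-sum input) with a more efficient choice in the Richert weighting, possibly also exploiting a two-dimensional ``semi-linear'' refinement or the Diamond--Halberstam--Richert tables for the $r$ vs. level tradeoff.

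Concretely, I would carry out the argument in the following order. First, set up the sifting sequence and reduce \eqref{Thm-ineq} to a lower bound of the shape $\sum_{a\in\mathcal{A},\ (a,P(z))=1} w(a)\gg y/\log^2 y$ with Richert weights $w(a)=1-\sum_{z\le p<y^{1/v}}(\cdots)$ arranged so that positivity of the weighted sum forces $\Omega(a)\le 7$. Second, establish the Type-I information: for the main term $\theta m^{\theta-1}$ part this is an elementary summation over progressions, and the error demands the exponential sum bound $\sum_{n\sim N}e(hn^\theta)\ll$ (something saving a power), on average over $h\le H$ with $H\asymp D N^{\theta-1}/N=D N^{\theta-2}$-ish and over the modulus. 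Third, feed the resulting level $D$ into the linear-sieve weighted-sieve machinery, appeal to the Jurkat--Richert theorem and the Richert weight inequality, and verify numerically that $\gamma>0.989$ suffices for the relevant inequality $\big($of the form $F(s)-\text{(weight loss)}\cdot\text{(something)}>0$ with $s=\log D/\log z\big)$ to hold with $r=7$. Fourth, assemble the pieces and let $x\to\infty$. The bulk of the paper's work, and the delicate point, will be step two: squeezing the exponential-sum estimates hard enough that the admissible level $D$ stays above the threshold $y^{\tau_0}$ needed for $r=7$ precisely when $\gamma$ exceeds $0.989$, not merely when $\gamma$ exceeds some larger constant.
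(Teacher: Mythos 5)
Your plan is essentially the Banks--Guo--Shparlinski argument (a level-of-distribution estimate for $\mathscr{A}=\{[p^{1/\gamma}]\le x\}$ fed into Richert's weighted sieve), and with the level actually attainable here that machinery alone stalls at $\mathcal{P}_8$: the Richert weight $\mathscr{W}_a=1-\lambda\sum_{p\mid a}(\cdots)$ is forced negative only for $\Omega(a)\geqslant 9$, so discarding the negative terms controls $\Omega(a)\leqslant 8$, not $\leqslant 7$. The paper's gain of one prime factor does not come from ``a more efficient choice in the Richert weighting'' or from Diamond--Halberstam--Richert tables; it comes from an additional Chen-style role reversal (switching) that removes the $\Omega(a)=8$ contribution. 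Concretely, the squarefree $a\in\mathscr{A}$ with exactly eight prime factors, all $\geqslant x^{1/17.41}$, are counted by $\lambda\,S(\mathscr{E},x^{\gamma/2})$ with $\mathscr{E}=\{n:[n^{1/\gamma}]\in\mathscr{B}\}$, where $\mathscr{B}$ is the set of such products of eight primes, and an upper-bound linear sieve is applied to $\mathscr{E}$, i.e.\ one now sieves the \emph{prime} variable. This requires a second mean value theorem (the paper's Lemma on $\mathscr{R}_d$), namely $\sum_{d\leqslant x^{\xi}}\big|\sum_{\ell\in\mathscr{B}}\big(\psi(-(\ell+1)^{\gamma}/d)-\psi(-\ell^{\gamma}/d)\big)\big|\ll x^{\gamma}\mathscr{L}^{-A}$, whose proof needs a bilinear structure in $\ell$; that structure is supplied by a combinatorial lemma showing every $\ell=p_1\cdots p_8\in\mathscr{B}$ has a partial product in a prescribed window $[X^{\alpha_0},X^{\beta_0}]$. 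None of this appears in your proposal, and without it the argument you describe proves only the known $\mathcal{P}_8$ result.

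There are also quantitative misconceptions in your sieve bookkeeping. The density for $d\mid[p^{1/\gamma}]$ is $\omega(d)/d=1/d$, not $1/\varphi(d)$: the values $[p^{1/\gamma}]$ fall in all residue classes mod $d$ essentially uniformly, so the problem is a linear sieve with $\omega\equiv 1$. More importantly, the level of distribution is nowhere near $y^{1-\delta}$: the paper obtains only $D=x^{\xi}$ with $\xi=(140\gamma-99)/270$, about $x^{0.152}$ as $\gamma\to1$ (against elements of size up to $x$ and cardinality $\asymp x^{\gamma}/\log x$), and it is exactly this small level that forces $r$ as large as $7$; if one had level close to the cardinality, a weighted linear sieve would give $\mathcal{P}_2$-type conclusions, so your heuristic ``$\tau\to1$ yields $r=7$'' is not the right accounting. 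Finally, the threshold $\gamma_7=0.989$ is not where the exponential-sum estimates break down (the first mean value theorem needs only $\gamma>99/140$, the second $\gamma>225/238$); it is dictated by the closing numerical step, namely positivity of the bracketed quantity (lower-bound sieve term minus the weighted $S(\mathscr{A}_p,\cdot)$ integral minus the switched eight-prime-factor integral), which is $\geqslant 0.00024867$ only for $0.989<\gamma<1$.
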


\begin{remark*}
In order to compare our result with the result of Banks--Guo--Shparlinski
\cite{Banks-Guo-Shparlinski-2016}, we emphasize that the method in \cite{Banks-Guo-Shparlinski-2016} only gives
$R=8$ for $\gamma$ near to one, while our result reduce $R$ to $7$ when $\gamma$ near to one.
\end{remark*}

\begin{notation}
Throughout this paper, $x$ is a sufficiently large number; $\varepsilon$ and $\eta$ are sufficiently small positive numbers, which may be different in each occurrences. Let $p$, with or without subscripts, always denote a prime number. We use $[x],\,\{x\}$ and $\|x\|$ to denote the integral part of $x$, the fractional part of $x$ and the distance from $x$ to the nearest integer, respectively.
Denote by $\mathcal{P}_r$ an almost--prime with at most $r$ prime factors, counted according to multiplicity. As usual, $\Lambda(n),\mu(n)$ and $\Omega(n)$ denote von Mangoldt's function, M\"{o}bius' function, and the number of total prime factors of $n$, respectively. We write $\mathscr{L}=\log x$; $e(t)=\exp(2\pi it)$; $\psi(t)=t-[t]-\frac{1}{2}$. The notation $n\sim X$ means that $n$ runs through a subinterval of $(X/2,X]$, whose endpoints are not necessarily the same in the different occurrences and may depend on the outer summation variables. $f(x)\ll g(x)$ means that $f(x)=O(g(x))$; $f(x)\asymp g(x)$ means that $f(x)\ll g(x)\ll f(x)$.
\end{notation}


\section{Preliminary Lemmas}
Let $\mathscr{A}$ denote a finite set of integers, $\mathscr{P}$ an infinite set of primes,
$\overline{\mathscr{P}}$ the set of primes with its elements not belonging to $\mathscr{P}$. For given $z\geqslant2$, we set
\begin{equation*}
P(z)=\prod_{\substack{p<z\\ p\in\mathscr{P}}}p.
\end{equation*}
Define the sifting function as
\begin{equation*}
S(\mathscr{A},\mathscr{P},z)=\big|\big\{a\in\mathscr{A}:(a,P(z))=1\big\}\big|.
\end{equation*}
For $d|P(z)$, define $\mathscr{A}_d=\{a\in\mathscr{A}:a\equiv0\!\pmod d\}$.
Moreover, we assume that $|\mathscr{A}_d|$ may be written in the form
\begin{equation}\label{sieve-condi-1}
|\mathscr{A}_d|=\frac{\omega(d)}{d}X+r_d,\qquad \mu(d)\not=0,\qquad (d,\overline{\mathscr{P}})=1,
\end{equation}
where $\omega(d)$ is a multiplicative function such that $0\leqslant\omega(p)<p$, $X$ is a positive number
independent of $d$, and $r_d$ is an error term which is small on average, allowing $X$
to approximate the cardinality of $\mathscr{A}$. Also, we assume that the function $\omega(p)$ is constant on average over $p$ in $\mathscr{P}$, which means that
\begin{equation}\label{sieve-condi-2}
\sum_{\substack{z_1\leqslant p<z_2\\ p\in\mathscr{P}}}\bigg(1-\frac{\omega(p)}{p}\bigg)^{-1}
\leqslant\frac{\log z_2}{\log z_1}\bigg(1+\frac{\mathcal{K}}{\log z_1}\bigg)
\end{equation}
holds for all $z_2>z_1\geqslant2$, where $\mathcal{K}$ is a constant satisfying $\mathcal{K}\geqslant1$. For details of (\ref{sieve-condi-1}) and (\ref{sieve-condi-2}), one can see the arguments (4.12)--(4.15) on page 28 of Halberstam and Richert \cite{Halberstam-Richert-book}, and the arguments on page 205 of Iwaniec \cite{Iwaniec-1981}.
\begin{lemma}\label{upper-lower-sieve}
Suppose that the conditions (\ref{sieve-condi-1}) and (\ref{sieve-condi-2}) hold. Then we have
\begin{align}
 S(\mathscr{A},\mathscr{P},z) \geqslant & \,\, XV(z)\big(f(s)+O\big(\log^{-1/3}D\big)\big)-R_D, \label{lower-sieve}
               \\
 S(\mathscr{A},\mathscr{P},z) \leqslant & \,\, XV(z)\big(F(s)+O\big(\log^{-1/3}D\big)\big)+R_D, \label{upper-sieve}
\end{align}
where
\begin{equation*}
R_D=\sum_{\substack{d<D\\ d|P(z)}}|r_d|,\qquad s=\frac{\log D}{\log z},
\end{equation*}
\begin{equation}\label{V(z)-def}
V(z)=\mathcal{C}(\omega)\frac{e^{-C_0}}{\log z}\bigg(1+O\bigg(\frac{1}{\log z}\bigg)\bigg),
\end{equation}
\begin{equation}\label{C(omega)-def}
\mathcal{C}(\omega)=\prod_p\bigg(1-\frac{\omega(p)}{p}\bigg)\bigg(1-\frac{1}{p}\bigg)^{-1},
\end{equation}
where $C_0$ denotes the Euler's constant, $f(s)$ and $F(s)$ denote the classical functions in the linear sieve
theory, which are determined by the following differential--difference equation
\begin{equation}
\begin{cases}\label{diff-eq}
F(s)=\displaystyle\frac{2e^{C_0}}{s},\quad f(s)=0, \quad 0<s\leqslant2,\\
\displaystyle\frac{\mathrm{d}}{\mathrm{d}s}(sF(s))=f(s-1),\quad \displaystyle\frac{\mathrm{d}}{\mathrm{d}s}(sf(s))=F(s-1),\quad s\geqslant2.
\end{cases}
\end{equation}
\end{lemma}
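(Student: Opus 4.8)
The plan is to deduce both bounds from the Jurkat--Richert linear sieve (the fundamental lemma of the combinatorial sieve in dimension $\kappa=1$), so that the actual work is checking that the hypotheses (\ref{sieve-condi-1}) and (\ref{sieve-condi-2}) are precisely the input that classical machinery requires. I would begin with Rosser's construction of upper and lower bound sieve weights $\lambda_d^{\pm}$, supported on squarefree $d\mid P(z)$ with $d<D$, satisfying $|\lambda_d^{\pm}|\leqslant1$ together with the sign conditions $\sum_{d\mid n}\lambda_d^{-}\leqslant\sum_{d\mid n}\mu(d)\leqslant\sum_{d\mid n}\lambda_d^{+}$ for all $n\mid P(z)$. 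Applying these with $n=(a,P(z))$ and summing over $a\in\mathscr{A}$ gives
\[
\sum_{d\mid P(z)}\lambda_d^{-}|\mathscr{A}_d|\leqslant S(\mathscr{A},\mathscr{P},z)\leqslant\sum_{d\mid P(z)}\lambda_d^{+}|\mathscr{A}_d|.
\]

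Next I would insert the decomposition $|\mathscr{A}_d|=\frac{\omega(d)}{d}X+r_d$ from (\ref{sieve-condi-1}). Since $|\lambda_d^{\pm}|\leqslant1$, the error contribution is bounded by $\sum_{d<D,\,d\mid P(z)}|r_d|=R_D$, which yields the $\pm R_D$ terms. The main term is $X\sum_{d\mid P(z)}\lambda_d^{\pm}\,\omega(d)/d$, and the heart of the matter is to establish
\[
\sum_{d\mid P(z)}\lambda_d^{+}\frac{\omega(d)}{d}=V(z)\bigl(F(s)+O(\log^{-1/3}D)\bigr),\qquad
\sum_{d\mid P(z)}\lambda_d^{-}\frac{\omega(d)}{d}=V(z)\bigl(f(s)+O(\log^{-1/3}D)\bigr),
\]
where $V(z)=\prod_{p<z,\,p\in\mathscr{P}}(1-\omega(p)/p)$ and $s=\log D/\log z$. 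This is where hypothesis (\ref{sieve-condi-2}) enters: it asserts that the local densities $\omega(p)/p$ average to $1/\log$, i.e.\ the problem has sieve dimension $1$, so that the weighted divisor sums, analysed via Buchstab's identity iterated against Rosser's weights, can be compared to the solution $(F,f)$ of the differential--difference system (\ref{diff-eq}); the constant $\mathcal{K}\geqslant1$ merely controls the implied constants and is absorbed into the $O(\log^{-1/3}D)$ remainder. Finally $V(z)$ is rewritten by Mertens' theorem and (\ref{C(omega)-def}) in the shape $\mathcal{C}(\omega)e^{-C_0}(\log z)^{-1}(1+O(1/\log z))$, which is (\ref{V(z)-def}), giving (\ref{lower-sieve}) and (\ref{upper-sieve}).

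The main obstacle is the analytic estimation of the sums $\sum_d\lambda_d^{\pm}\,\omega(d)/d$ in terms of $F,f$ with error $O(\log^{-1/3}D)$: this is the technical core of the linear sieve and I would not reprove it, but quote it in the form given by Halberstam--Richert \cite{Halberstam-Richert-book} (Chapter 8, under the hypotheses (4.12)--(4.15) on p.~28 already cited) or by Iwaniec \cite{Iwaniec-1981}, after verifying that our normalisations of $F$ and $f$ via (\ref{diff-eq}) and of $V(z)$ via (\ref{V(z)-def})--(\ref{C(omega)-def}) agree with theirs. Consequently the only point that genuinely needs to be recorded is that (\ref{sieve-condi-1})--(\ref{sieve-condi-2}) are exactly the required hypotheses, whence the statement follows.
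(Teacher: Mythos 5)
Your proposal is correct and in substance coincides with the paper's treatment: the paper proves this lemma purely by citation, to (6)--(9) on page 209 of Iwaniec \cite{Iwaniec-1981} for (\ref{lower-sieve}), (\ref{upper-sieve}) and (\ref{diff-eq}) (the case $\varkappa=1$, $\beta=2$), and to (2.4)--(2.5) of Chapter 5 of Halberstam--Richert \cite{Halberstam-Richert-book} for (\ref{V(z)-def}) and (\ref{C(omega)-def}). Your sketch of Rosser's weights and the reduction to the main-term estimate is standard and sound, and since you ultimately quote the same sources for the analytic core, the argument is essentially the same as the paper's.
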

\begin{proof}
For (\ref{lower-sieve}) and (\ref{upper-sieve}), one can refer to (6), (7), (8) on page 209 of Iwaniec \cite{Iwaniec-1981}, while (\ref{diff-eq}) can be referred to as a special case with $\varkappa=1,\beta=2$ in
(9) of Iwaniec \cite{Iwaniec-1981}. Moreover, for (\ref{V(z)-def}) and (\ref{C(omega)-def}) one can see (2.4)
and (2.5) of Chapter $5$ in Halberstam and Richert \cite{Halberstam-Richert-book}.
\end{proof}

\begin{lemma}\label{psi-expansion}
For any $H>1$, one has
\begin{equation}\label{psi-expan}
\psi(t)=-\sum_{0<|h|\leqslant H}\frac{e(th)}{2\pi i h}+O(g(t,H)),
\end{equation}
where
\begin{equation*}
g(t,H):=\min\bigg(1,\frac{1}{H\|t\|}\bigg)=\sum_{h=-\infty}^{\infty}b_he(th),
\end{equation*}
and
\begin{equation*}
b_h\ll\min\bigg(\frac{\log2H}{H},\frac{1}{|h|},\frac{H}{|h|^2}\bigg).
\end{equation*}
\end{lemma}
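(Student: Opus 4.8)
The statement packages together two classical ingredients: a truncated Fourier expansion of the sawtooth $\psi$ in the spirit of Vinogradov and Vaaler, and the Fourier analysis of the kernel $g(t,H)=\min(1,1/(H\|t\|))$. I would prove the two halves independently.

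\emph{Step 1: the truncated expansion.} Set $\psi_H(t)=-\sum_{0<|h|\leqslant H}\frac{e(th)}{2\pi i h}$. Pairing $h$ with $-h$ turns this into the real form $\psi_H(t)=-\frac1\pi\sum_{1\leqslant h\leqslant H}\frac{\sin 2\pi ht}{h}$, whereas the full conditionally convergent Fourier series of $\psi$ is $-\frac1\pi\sum_{h\geqslant 1}\frac{\sin 2\pi ht}{h}$ for $t\notin\mathbb{Z}$. Hence it is enough to bound the tail $\psi(t)-\psi_H(t)=-\frac1\pi\sum_{h>H}\frac{\sin 2\pi ht}{h}$ by $O(g(t,H))$, and two estimates suffice. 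First, the classical uniform bound $\bigl|\sum_{1\leqslant h\leqslant N}\frac{\sin h\theta}{h}\bigr|\leqslant\int_0^\pi\frac{\sin u}{u}\,\mathrm{d}u$, valid for every $N$ and every real $\theta$, shows $|\psi_H(t)|\ll 1$ uniformly, hence $|\psi(t)-\psi_H(t)|\ll 1$ (which already covers $t$ near $\mathbb{Z}$, where $g(t,H)=1$). Second, when $\|t\|\geqslant 1/H$, Abel summation applied to $\sum_{h>H}\frac{\sin 2\pi ht}{h}$ together with the bounds $\bigl|\sum_{H<h\leqslant N}\sin 2\pi ht\bigr|\leqslant\frac{1}{2\|t\|}$ and $\sum_{h>H}h^{-2}\ll H^{-1}$ gives $|\psi(t)-\psi_H(t)|\ll\frac{1}{H\|t\|}$. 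Combining the two yields $|\psi(t)-\psi_H(t)|\ll\min(1,1/(H\|t\|))=g(t,H)$, which is exactly \eqref{psi-expan}.

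\emph{Step 2: Fourier expansion of $g$.} The function $t\mapsto g(t,H)$ is $1$-periodic, even and piecewise smooth, so it has an absolutely convergent expansion $g(t,H)=\sum_h b_h e(th)$ with $b_h=\int_0^1 g(t,H)e(-th)\,\mathrm{d}t\in\mathbb{R}$. For the three bounds on $b_h$: splitting $[0,1]$ at the points where $\|t\|=1/H$ gives $b_0=\int_0^1 g(t,H)\,\mathrm{d}t\ll\frac1H+\int_{1/H}^{1/2}\frac{\mathrm{d}t}{Ht}\ll\frac{\log 2H}{H}$, and since $|b_h|\leqslant b_0$ this already gives $b_h\ll\frac{\log 2H}{H}$ for all $h$. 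Because $g(t,H)$ has total variation $O(1)$ on $[0,1]$ — it decreases monotonically from $1$ at $t=0$ to $O(1/H)$ at $t=\frac12$ and rises back — one integration by parts (Riemann--Stieltjes, the boundary terms cancelling by periodicity) gives $b_h=\frac{1}{2\pi i h}\int_0^1 e(-th)\,\mathrm{d}g(t)$ and hence $b_h\ll\frac{1}{|h|}$. A second integration by parts, accounting for the jumps of $g'$ at the corners $\|t\|=1/H$ (each of size $\ll H$) and at $t=\frac12$ (of size $\ll 1/H$) and using $\int_{1/H}^{1/2}|g''(t)|\,\mathrm{d}t\ll\frac1H\int_{1/H}^{1/2}t^{-3}\,\mathrm{d}t\ll H$, produces $b_h\ll\frac{1}{h^2}(H+H)\ll\frac{H}{|h|^2}$. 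Altogether $b_h\ll\min\bigl(\frac{\log 2H}{H},\frac{1}{|h|},\frac{H}{|h|^2}\bigr)$, as claimed.

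\emph{Main obstacle.} Neither step is deep, but the delicate point is uniformity in $t$: in Step 1, near $t\in\mathbb{Z}$ the tail of the Fourier series does not decay, and one must invoke the uniform boundedness of the conjugate partial sums rather than a naive $\log H$ estimate; in Step 2, the contribution of the corner of $g$ at $\|t\|=1/H$ must be isolated carefully when integrating by parts twice, since it is precisely this corner that produces the $H/|h|^2$ term. Alternatively, the whole lemma may simply be quoted: the truncated expansion goes back to Vinogradov and Vaaler, and the coefficient estimates for $\min(1,1/(H\|t\|))$ are standard (cf. the treatment of van der Corput's method by Graham and Kolesnik).
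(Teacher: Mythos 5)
Your proposal is correct, and all the estimates check out: the pairing of $h$ with $-h$ to get the real sine form, the uniform boundedness of the partial sums combined with the Abel-summation bound $\ll 1/(H\|t\|)$ for the tail, and the three coefficient bounds for $g$ obtained respectively from $0\leqslant g\leqslant\min(1,1/(H\|t\|))$ (giving $b_0\ll H^{-1}\log 2H$ and $|b_h|\leqslant b_0$), from the bounded variation of $g$, and from a second integration by parts picking up the corner jumps of size $\ll H$ at $\|t\|=1/H$. The difference from the paper is that the paper offers no proof at all: it simply cites the arguments on page~245 of Heath--Brown \cite{Heath-Brown-1983}, exactly the fallback you mention in your closing remark. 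So your write-up is a self-contained reconstruction of the standard argument behind that citation rather than a different method; what it buys is independence from the reference (and explicit tracking of the uniformity in $t$ near $\mathbb{Z}$ and of the corner contribution producing the $H/|h|^2$ bound, which are indeed the only delicate points), at the cost of a page of routine Fourier analysis that the paper chose to delegate.
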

\begin{proof}
See the arguments on page 245 of Heath--Brown \cite{Heath-Brown-1983}.
\end{proof}

\begin{lemma}\label{expo-pair-gernal}
Suppose that $f(x):[a,b]\to\mathbb{R}$ has continuous derivatives of arbitrary order on $[a,b]$, where $1\leqslant a<b\leqslant2a$. Suppose further that
\begin{equation*}
 \big|f^{(j)}(x)\big|\asymp \lambda_1 a^{1-j},\qquad j\geqslant1, \qquad x\in[a,b].
\end{equation*}
Then for any exponential pair $(\kappa,\ell)$, we have
\begin{equation*}
 \sum_{a<n\leqslant b}e(f(n))\ll \lambda_1^\kappa a^\ell+\lambda_1^{-1}.
\end{equation*}
\end{lemma}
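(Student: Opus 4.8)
The plan is to recognise the asserted inequality as the derivative--test form of an exponential pair, after removing a degenerate range of $\lambda_1$. First I would deal with the case $\lambda_1\leqslant a^{-1}$ on its own: since $b\leqslant 2a$, the sum on the left--hand side has $O(a)$ terms, each of modulus $1$, so it is $\ll a\leqslant\lambda_1^{-1}$ and the bound is trivial. Hence from now on one may assume $\lambda_1>a^{-1}$.

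Next I would normalise by putting $F:=\lambda_1 a$, so that $F>1$ and the hypothesis $|f^{(j)}(x)|\asymp\lambda_1 a^{1-j}$ becomes $|f^{(j)}(x)|\asymp F a^{-j}$ for every $j\geqslant1$ and every $x$ in the dyadic range $[a,b]\subseteq[a,2a]$; in particular $f''$ is bounded away from $0$, so $f'$ is monotonic, and, $f$ having continuous derivatives of all orders, all the derivative conditions demanded by van der Corput's $A$--process are in force. I would then invoke the standard exponential--pair estimate -- the derivative test obtained by iterating the $A$-- and $B$--processes from the trivial bound, with the Kusmin--Landau inequality supplying the secondary term -- applied with length parameter $a$ and amplitude $F$: for any exponential pair $(\kappa,\ell)$ it yields
\[
\sum_{a<n\leqslant b}e(f(n))\ll\Bigl(\frac{F}{a}\Bigr)^{\!\kappa}a^{\ell}+\frac{a}{F}=\lambda_1^{\kappa}a^{\ell}+\lambda_1^{-1},
\]
which is precisely the claim. (In the low--amplitude range there is an even more elementary route: once $\lambda_1$ is below a suitable fixed constant, $|f'(x)|\asymp\lambda_1$ forces $\|f'(x)\|\asymp\lambda_1$ on all of $[a,b]$, whence the Kusmin--Landau inequality alone gives $\sum_{a<n\leqslant b}e(f(n))\ll\lambda_1^{-1}$, which is enough there.)

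I do not expect a genuine obstacle here: the substance is merely the transcription of a classical inequality from the variables (amplitude, length) into the homogeneous variables $(\lambda_1,a)$. The two points that call for a little care are keeping the secondary term $a/F=\lambda_1^{-1}$, which must not be discarded because it dominates for small $\lambda_1$, and the matching at $\lambda_1\asymp a^{-1}$ between the trivial estimate and the range in which the derivative test applies. Should a fully self--contained treatment be wanted, the hardest part would be re--proving the derivative test itself, i.e.\ the combinatorics of composing the $A$-- and $B$--processes (with Kusmin--Landau as the base estimate); but that is entirely standard and I would simply quote it.
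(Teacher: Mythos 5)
Your argument is correct and in substance coincides with the paper's treatment: the paper proves this lemma simply by citing (3.3.4) of Graham and Kolesnik, which is exactly the classical exponential--pair (derivative--test) estimate you quote after the routine normalisation $F=\lambda_1 a$ and the trivial handling of the range $\lambda_1\leqslant a^{-1}$. Nothing further is needed.
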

\begin{proof}
 See (3.3.4) of Graham and Kolesnik \cite{Graham-Kolesnik-book}.
\end{proof}

\begin{lemma}\label{latticepoints}
For $\frac{1}{2}<\gamma<1,J\geqslant1,L\geqslant1,D\geqslant1,\Delta>0$, let $\mathscr{N}(\Delta)$ denote the number of solutions of
the following inequality
\begin{equation*}
  \bigg|\frac{h_1\ell_1^{1/\gamma}}{d_1} -\frac{h_2\ell_2^{1/\gamma}}{d_2}\bigg|<\Delta, \qquad
  h_1,h_2\sim J, \quad  \ell_1,\ell_2\sim L,\quad  d_1,d_2\sim D.
\end{equation*}
Then we have
\begin{equation*}
    \mathscr{N}(\Delta)\ll(JD)^\varepsilon\big(JDL+\Delta D^3JL^{2-1/\gamma}\big).
\end{equation*}
\end{lemma}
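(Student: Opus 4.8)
The plan is to clear denominators, reduce the count to a lattice‑point problem, and then peel off the (near‑)diagonal, which will account for the term $JDL$, leaving the generic contribution to produce $\Delta D^3JL^{2-1/\gamma}$. First I would clear denominators: since $d_1,d_2\sim D$, the inequality defining $\mathscr N(\Delta)$ implies $|h_1d_2\ell_1^{1/\gamma}-h_2d_1\ell_2^{1/\gamma}|<\Delta D^2$; collecting the variables as $n_1=h_1d_2$, $n_2=h_2d_1$ (both of size $\asymp JD$) and bounding the number of factorisations of each $n_i$ by $\tau(n_i)\ll(JD)^\varepsilon$, it suffices to prove
$$\mathscr M:=\#\bigl\{n_1,n_2\asymp JD,\ \ell_1,\ell_2\sim L:\ |n_1\ell_1^{1/\gamma}-n_2\ell_2^{1/\gamma}|<\Delta D^2\bigr\}\ll(JD)^\varepsilon\bigl(JDL+\Delta D^3JL^{2-1/\gamma}\bigr).$$
For fixed $n_1,n_2,\ell_2$ the admissible $\ell_1$ lie in an interval of length $\asymp(\Delta D/J)L^{1-1/\gamma}$; when this is $\geqslant1$ (the range $\Delta D^2\geqslant JD\,L^{1/\gamma-1}$) there are $\ll(\Delta D/J)L^{1-1/\gamma}$ of them, and summing over $n_1,n_2\asymp JD$, $\ell_2\sim L$ gives $\mathscr M\ll\Delta D^3JL^{2-1/\gamma}$. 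In the complementary range there is at most one admissible $\ell_1$ per triple $(n_1,n_2,\ell_2)$, present exactly when $\|(n_2/n_1)^\gamma\ell_2\|<\delta$ with $\delta\asymp(\Delta D/J)L^{1-1/\gamma}<1$, so $\mathscr M\ll\sum_{n_1,n_2\asymp JD}\#\{\ell_2\sim L:\ \|(n_2/n_1)^\gamma\ell_2\|<\delta\}$.

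Next I would isolate the diagonal. When $n_1=n_2$ one has $(n_2/n_1)^\gamma=1$, so every $\ell_2$ counts; restoring the divisor weights, the number of $h_i\sim J$, $d_i\sim D$ with $h_1d_2=h_2d_1$ is $\ll(JD)^{1+\varepsilon}$, and with the $L$ choices of $\ell_2$ this yields $\ll(JD)^\varepsilon JDL$, the first term. More generally, if $(n_2/n_1)^\gamma$ is rational — equivalently $(\ell_1/\ell_2)^{1/\gamma}\in\mathbb Q$ on the original side — then, because $\gamma\in(\tfrac12,1)$, rationality of $(a/b)^\gamma$ in lowest terms forces both $a$ and $b$ to be perfect $w$‑th powers when $\gamma=r/w$ is rational and forces $a=b$ when $\gamma$ is irrational; in either case such tuples are sparse and an elementary divisor‑function count again bounds their contribution by $\ll(JD)^\varepsilon JDL$.

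Finally, for the bulk, where $\beta:=(n_2/n_1)^\gamma\neq1$, a classical spacing bound (three‑distance theorem) gives $\#\{\ell_2\sim L:\ \|\beta\ell_2\|<\delta\}\ll\delta L+L/q(\beta)$ with $q(\beta)\leqslant L$ a denominator satisfying $\|q(\beta)\beta\|\leqslant1/L$. Summing the term $\delta L$ over $n_1,n_2\asymp JD$ gives $\ll\delta(JD)^2L\asymp\Delta D^3JL^{2-1/\gamma}$, the second term; the term $L/q(\beta)$ I would handle by a dyadic decomposition over $q(\beta)\sim Q$, observing that $q(\beta)\leqslant Q$ forces $n_2/n_1$ to lie within $O(1/(QL))$ of one of the $\ll Q^2$ fractions of denominator $\leqslant Q$, and then using the strict concavity of $t\mapsto t^\gamma$ — so that $n_2\mapsto(n_2/n_1)^\gamma$ has non‑vanishing second derivative, which is where van der Corput's method (Lemma~\ref{expo-pair-gernal}) enters — together with the divisor estimates, to show that tuples with $q(\beta)$ abnormally small are too rare to spoil the bound. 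This last step, namely controlling uniformly in $\Delta$ how often $(n_2/n_1)^\gamma\ell_2$ is abnormally close to an integer, is the main obstacle and the part requiring the most care.
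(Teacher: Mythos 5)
Your opening reduction coincides with the paper's: substituting $u_1=h_1d_2$, $u_2=h_2d_1$ and absorbing the number of factorizations into $(JD)^\varepsilon$ reduces Lemma \ref{latticepoints} to counting quadruples $(u_1,u_2,\ell_1,\ell_2)$ with $|u_1\ell_1^{1/\gamma}-u_2\ell_2^{1/\gamma}|<\Delta D^2$. At that point the paper simply invokes Lemma 1 of Fouvry--Iwaniec \cite{Fouvry-Iwaniec-1989} with parameters $(\alpha,\beta,M,N,\Delta)=(1,\gamma^{-1},JD,L,\Delta DJ^{-1}L^{-1/\gamma})$, which yields the stated bound in one stroke. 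You instead attempt a self-contained elementary argument; the parts you actually carry out (the regime where the admissible interval for $\ell_1$ has length $\geqslant1$, the exact diagonal $n_1=n_2$, and the $\delta L$ portion of the spacing bound) are correct and account for both terms of the lemma in those regimes.

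However, the step you yourself defer as ``the main obstacle'' is a genuine gap, not a finishing touch. Once you write $\#\{\ell_2\sim L:\|\beta\ell_2\|<\delta\}\ll\delta L+L/q(\beta)$ with $q(\beta)\leqslant L$, summing $L/q(\beta)$ over all $(JD)^2$ pairs gives at least $(JD)^2$ (since $L/q\geqslant1$), which already exceeds the target $JDL$ whenever $JD>L$ --- a regime the lemma must cover, as $J,L,D$ are arbitrary. The argument therefore only closes if one shows, uniformly in $\Delta$, that the $L/q$ term is genuinely charged (i.e.\ $\|q\beta\|$ is of size $O(\delta q/L)$, not merely $\leqslant 1/L$) for very few pairs $(n_1,n_2)$; this Diophantine statement is precisely the content of the Fouvry--Iwaniec spacing lemma, and your proposed route to it --- dyadic decomposition in $q$, counting $n_2/n_1$ near fractions of denominator $\leqslant Q$, plus an appeal to the curvature of $t\mapsto t^\gamma$ via Lemma \ref{expo-pair-gernal} --- is only a plan, with no estimate derived. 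In addition, the rationality digression is both beside the point and partly false: for transcendental $\gamma$ (e.g.\ $\gamma=\log 2/\log 3\in(\tfrac12,1)$, where $3^{\gamma}=2$) one can have $(a/b)^{\gamma}\in\mathbb{Q}$ with $a\neq b$, and in any case the dangerous pairs are those for which $(n_2/n_1)^{\gamma}$ is merely very well approximated by a small-denominator rational, which no rationality classification controls. As written the proof is incomplete; either establish the required spacing estimate or quote \cite{Fouvry-Iwaniec-1989} as the paper does.
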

\begin{proof}
Let $h_1d_2=u_1,h_2d_1=u_2$. Then one has
\begin{align*}
            \mathscr{N}(\Delta)
\ll & \,\, (JD)^\varepsilon\cdot\#\bigg\{(u_1,u_2,\ell_1,\ell_2):\frac{JD}{4}<u_i\leqslant JD,
            \frac{L}{2}<\ell_i\leqslant L,\Big|u_1\ell_1^{1/\gamma}-u_2\ell_2^{1/\gamma}\Big|<\Delta D^2\bigg\}
                    \nonumber \\
\ll & \,\, (JD)^\varepsilon\cdot\#\bigg\{(u_1,u_2,\ell_1,\ell_2):\frac{JD}{4}<u_i\leqslant JD,\quad
            \frac{L}{2}<\ell_i\leqslant L,
                    \nonumber \\
    & \,\,\qquad\qquad\qquad  \bigg|\frac{u_1}{u_2}-\bigg(\frac{\ell_2}{\ell_1}\bigg)^{1/\gamma}\bigg|
               <16\Delta DJ^{-1}L^{-1/\gamma}\bigg\}.
\end{align*}
By using Lemma 1 of Fouvry and Iwaniec \cite{Fouvry-Iwaniec-1989} with parameters
\begin{equation*}
(\alpha,\beta,M,N,\Delta)=\big(1,\gamma^{-1},JD,L,\Delta DJ^{-1}L^{-1/\gamma}\big),
\end{equation*}
one derives that
\begin{equation*}
    \mathscr{N}(\Delta)\ll(JD)^\varepsilon\big(JDL+\Delta D^3JL^{2-1/\gamma}\big),
\end{equation*}
which completes the proof of Lemma \ref{latticepoints}.
\end{proof}

\begin{lemma}\label{Robert-Sargos-lemma}
Assume that $H\geqslant1,M\geqslant1,N\geqslant1,X>0$, and $\alpha,\beta,\gamma$ are fixed real numbers such that
$\alpha(\alpha-1)\beta\gamma\neq0$. Let $S$ be exponential sum defined as
\begin{equation*}
S=\sum_{h\sim H}\sum_{n\sim N}\Bigg|\sum_{m\sim M}
e\bigg(X\frac{m^{\alpha}h^{\beta}n^{\gamma}}{M^{\alpha}H^{\beta}N^{\gamma}}\bigg)\Bigg|.
\end{equation*}
for which the following inequality holds
\begin{equation*}
S\ll_{\varepsilon}(HNM)^{1+\varepsilon}\Bigg(\bigg(\frac{X}{HNM^2}\bigg)^{1/4}+\frac{1}{M^{1/2}}+\frac{1}{X}\Bigg).
\end{equation*}
\end{lemma}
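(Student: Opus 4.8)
The plan is to treat the ranges of $X$ separately and, in the main range, to convert the inner sum over $m$ into a genuine three–dimensional monomial exponential sum by one application of Cauchy–Schwarz, after which the estimate follows from the three–dimensional exponential sum bounds of Robert and Sargos — or, if one prefers to stay self–contained, from two van der Corput $B$-processes (Lemma \ref{expo-pair-gernal} with the exponent pair $(\tfrac12,\tfrac12)$, applied iteratively) combined with a Fouvry–Iwaniec-type spacing estimate of the kind used in the proof of Lemma \ref{latticepoints}. First dispose of small $X$. If $X\leqslant1$ the assertion is trivial, since $S\ll HNM\ll HNM\,X^{-1}$. If $1<X\leqslant cM$ for a suitably small absolute constant $c\in(0,1)$, the phase $f(m)=Xm^{\alpha}h^{\beta}n^{\gamma}/(M^{\alpha}H^{\beta}N^{\gamma})$ satisfies $f'(m)\asymp X/M\leqslant\tfrac12$ and is monotone in $m$ (here $\alpha\neq1$ is used), so the Kusmin–Landau first–derivative test gives $\sum_{m\sim M}e(f(m))\ll M/X$ uniformly in $h,n$, whence $S\ll HNM/X$, which is dominated by the third term on the right. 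It therefore remains to produce the first two terms, and we may assume $X\gg M$.

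For $X\gg M$, Cauchy–Schwarz over $(h,n)$ gives
\begin{equation*}
S^{2}\leqslant HN\sum_{h\sim H}\sum_{n\sim N}\Big|\sum_{m\sim M}e(f(m))\Big|^{2};
\end{equation*}
expanding the square, writing $m_{1}=m_{2}+r$, isolating the diagonal $r=0$, and keeping for each fixed $r\neq0$ the triple sum over $(m_{2},h,n)$ intact (i.e. applying the triangle inequality over $r$ only), one obtains
\begin{equation*}
S^{2}\ll HN\Big(HNM+\sum_{1\leqslant|r|<M}\Big|\sum_{m_{2}\sim M}\sum_{h\sim H}\sum_{n\sim N}e\big(\Phi_{r}(m_{2},h,n)\big)\Big|\Big),
\end{equation*}
where, on writing $(m_{2}+r)^{\alpha}-m_{2}^{\alpha}=\alpha r\,m_{2}^{\alpha-1}\vartheta_{r}(m_{2})$ with $\vartheta_{r}$ smooth and $\asymp1$, the function $\Phi_{r}$ is a quasi–monomial in $(m_{2},h,n)$ with exponents $(\alpha-1,\beta,\gamma)$ and frequency $\asymp Xr/M$. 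Each inner sum is a three–dimensional monomial exponential sum of length $MHN$ and frequency $Xr/M$; estimating it — by the Robert–Sargos three–dimensional bound, or by running the $B$-process in two of the variables and then controlling the remaining sum via a Fouvry–Iwaniec count of near–coincidences of the dual monomial values, exactly as in the proof of Lemma \ref{latticepoints} — and summing the outcome over $r$ by a dyadic decomposition, the main term is $\ll(HNM)^{\varepsilon}(HN)^{1/2}M\,X^{1/2}$, while the diagonal contributes $HN\cdot HNM$ and the remaining pieces are of lower order for $X\gg M$. Multiplying by $HN$ and taking square roots yields $S\ll(HNM)^{1+\varepsilon}\big((X/(HNM^{2}))^{1/4}+M^{-1/2}\big)$, as required; the hypothesis $\alpha(\alpha-1)\beta\gamma\neq0$ is precisely what makes the $B$-processes and the spacing count non–degenerate.

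The main obstacle is the three–dimensional estimate itself: after the $B$-processes the size of the surviving sum has to be bounded by counting quadruples at which two monomial values lie within a prescribed distance, and it is the sharpness of that count — no loss beyond $(HNM)^{\varepsilon}$ — that is responsible both for the exponent $\tfrac14$ and for the precise shape $(X/(HNM^{2}))^{1/4}+M^{-1/2}+X^{-1}$ of the bound. Such a sharp count is exactly the content of the Fouvry–Iwaniec lemma underlying Lemma \ref{latticepoints}; since, moreover, the whole package has been recorded by Robert and Sargos as a ready–made three–dimensional exponential sum estimate, we shall in practice simply quote the latter, the Cauchy–Schwarz reduction displayed above showing how Lemma \ref{Robert-Sargos-lemma} is deduced from it.
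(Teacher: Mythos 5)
The paper's ``proof'' of Lemma \ref{Robert-Sargos-lemma} is nothing more than a citation of Theorem 3 of Robert and Sargos (2006), and since that theorem \emph{is} the lemma verbatim and your final step is to quote it, your proposal is in effect the same as the paper's — the preliminary Cauchy--Schwarz reduction you display is unnecessary. Do note, however, that your sketch on its own would not constitute a proof: the exponent $1/4$ comes from Robert--Sargos's sharp ``first spacing'' bound for the number of quadruples with $|m_1^{\alpha}+m_2^{\alpha}-m_3^{\alpha}-m_4^{\alpha}|$ small (their Theorem 2), which is not supplied by the Fouvry--Iwaniec count underlying Lemma \ref{latticepoints} nor by iterating the $B$-process with the exponent pair $(\tfrac12,\tfrac12)$, so if you were to make the argument self-contained that input would still be missing.
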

\begin{proof}
See Theorem 3 of Robert and Sargos \cite{Robert-Sargos-2006}.
\end{proof}

\section{Preliminaries of Transformation of the Problem}
Let
\begin{equation*}
 \mathscr{A}=\big\{a:a\leqslant x, \,\, a=[p^{1/\gamma}]\big\}.
\end{equation*}
By the elementary identity
\begin{equation*}
    [-k^\gamma]-[-(k+1)^{\gamma}] =
      \begin{cases}
        1, & \textrm{if \,\,$k=[n^{1/\gamma}]$},\\
        0, & \textrm{otherwise},
      \end{cases}
\end{equation*}
we obtain
\begin{align}\label{A_d-asymp}
         \#\mathscr{A}_d
= & \,\, \#\big\{a:a\leqslant x, \,\, a=[p^{1/\gamma}],\,\, a\equiv0\!\!\!\!\pmod d\big\}
                    \nonumber \\
= & \,\, \#\bigg\{k:\frac{p^{1/\gamma}-1}{d}<k\leqslant\frac{p^{1/\gamma}}{d},\,\,p\leqslant x^\gamma\bigg\}
         =\sum_{p\leqslant x^\gamma}
         \bigg(\bigg[\frac{p^{1/\gamma}}{d}\bigg]-\bigg[\frac{p^{1/\gamma}-1}{d}\bigg]\bigg)
                    \nonumber \\
= & \,\, \frac{1}{d}\sum_{p\leqslant x^\gamma}1+\sum_{p\leqslant x^\gamma}
         \bigg(\bigg\{\frac{p^{1/\gamma}-1}{d}\bigg\}-\bigg\{\frac{p^{1/\gamma}}{d}\bigg\}\bigg)
                    \nonumber \\
= & \,\, \frac{1}{d}\pi(x^\gamma)+\sum_{p\leqslant x^\gamma}
         \bigg(\psi\bigg(\frac{p^{1/\gamma}-1}{d}\bigg)-\psi\bigg(\frac{p^{1/\gamma}}{d}\bigg)\bigg)
         =:\frac{\omega(d)}{d}\pi(x^\gamma)+R_d,
\end{align}
say. In order to use upper bound sieve and lower bound sieve, i.e., Lemma \ref{upper-lower-sieve}, it is sufficient to establish that, for some $\xi=\xi(\gamma)$, there holds the following mean value theorem
\begin{equation*}
\sum_{d\leqslant x^\xi}\bigg|\sum_{p\leqslant x^\gamma}
\bigg(\psi\bigg(\frac{p^{1/\gamma}-1}{d}\bigg)-\psi\bigg(\frac{p^{1/\gamma}}{d}\bigg)\bigg)\bigg|
\ll \frac{x^{\gamma}}{(\log x)^A},
\end{equation*}
which is equivalent to
\begin{equation}\label{mean-value-equilavent}
\sum_{d\leqslant x^\xi}\bigg|\sum_{n\leqslant x^\gamma}\Lambda(n)
\bigg(\psi\bigg(\frac{n^{1/\gamma}-1}{d}\bigg)-\psi\bigg(\frac{n^{1/\gamma}}{d}\bigg)\bigg)\bigg|
\ll \frac{x^{\gamma}}{(\log x)^A}.
\end{equation}
For convenience, we put $D=x^\xi$. Trivially, (\ref{mean-value-equilavent}) will follow, if one can prove that
for any $X\leqslant x^\gamma$, there holds
\begin{equation}\label{mean-value-split}
\sum_{d\leqslant D}\bigg|\sum_{n\sim X}\Lambda(n)
\bigg(\psi\bigg(\frac{n^{1/\gamma}-1}{d}\bigg)-\psi\bigg(\frac{n^{1/\gamma}}{d}\bigg)\bigg)\bigg|
\ll x^{\gamma}\mathscr{L}^{-A}.
\end{equation}
Let $\eta>0$ be a sufficiently small number. If $X\leqslant x^{\gamma(1-\eta)}$, then the left--hand side of
(\ref{mean-value-split}) is
\begin{align}\label{1-trivial}
\ll & \,\, \sum_{d\leqslant D}\bigg|\sum_{n\sim X}\frac{\Lambda(n)}{d}\bigg|+
           \sum_{d\leqslant D}\bigg|\sum_{n\sim X}
           \Lambda(n)\bigg(\bigg[\frac{n^{1/\gamma}}{d}\bigg]-\bigg[\frac{n^{1/\gamma}-1}{d}\bigg]\bigg)\bigg|
                     \nonumber \\
\ll & \,\, X\mathscr{L}+\mathscr{L}\sum_{d\leqslant D}\sum_{n\sim X}
           \bigg(\bigg[\frac{n^{1/\gamma}}{d}\bigg]-\bigg[\frac{n^{1/\gamma}-1}{d}\bigg]\bigg).
\end{align}
For the sum on the right--hand side of (\ref{1-trivial}), by Lemma \ref{psi-expansion} with $H=H_1:=x^{\xi+\eta}$ in (\ref{psi-expan}), we have
\begin{align*}
  & \,\, \sum_{d\leqslant D}\sum_{n\sim X}
           \bigg(\bigg[\frac{n^{1/\gamma}}{d}\bigg]-\bigg[\frac{n^{1/\gamma}-1}{d}\bigg]\bigg)
                 \nonumber \\
= & \,\, \sum_{d\leqslant D}\sum_{n\sim X}\bigg(\frac{1}{d}+
           \psi\bigg(\frac{n^{1/\gamma}-1}{d}\bigg)-\psi\bigg(\frac{n^{1/\gamma}}{d}\bigg)\bigg)
                 \nonumber \\
= & \,\, \sum_{d\leqslant D}\sum_{n\sim X}\Bigg(\frac{1}{d}+\sum_{0<|h|\leqslant H_1}\frac{1}{2\pi ih}
         \bigg(e\bigg(\frac{hn^{1/\gamma}}{d}\bigg)-e\bigg(\frac{h(n^{1/\gamma}-1)}{d}\bigg)\bigg)
                 \nonumber \\
  & \,\, \qquad+O\bigg(g\bigg(\frac{n^{1/\gamma}}{d},H_1\bigg)\bigg)
         +O\bigg(g\bigg(\frac{n^{1/\gamma}-1}{d},H_1\bigg)\bigg)\Bigg)=:S_0+S_1+S_2+S_3,
\end{align*}
say. Trivially, $S_0\ll X\mathscr{L}\ll x^{\gamma-\eta}$. For $S_2$ and $S_3$, by Lemma \ref{expo-pair-gernal}
with $(\kappa,\ell)=(\frac{1}{2},\frac{1}{2})$, we have
\begin{align*}
            S_2,S_3
 \ll & \,\, \sum_{d\leqslant D}\sum_{h=-\infty}^\infty |b_h|\bigg|\sum_{n\sim X}
            e\bigg(\frac{hn^{1/\gamma}}{d}\bigg)\bigg|
                   \nonumber \\
 \ll & \,\, |b_0|DX+\sum_{d\leqslant D}\sum_{\substack{h=-\infty\\ h\not=0}}^\infty|b_h|
            \Big(d|h|^{-1}X^{1-1/\gamma}+d^{-1/2}|h|^{1/2}X^{1/(2\gamma)}\Big)
                   \nonumber \\
 \ll & \,\, \mathscr{L}DXH_1^{-1}
            +\sum_{d\leqslant D}\sum_{0<|h|\leqslant H_1}|h|^{-1}
             \Big(d|h|^{-1}X^{1-1/\gamma}+d^{-1/2}|h|^{1/2}X^{1/(2\gamma)}\Big)
                   \nonumber \\
     & \,\, +\sum_{d\leqslant D}\sum_{|h|>H_1}H_1|h|^{-2}
             \Big(d|h|^{-1}X^{1-1/\gamma}+d^{-1/2}|h|^{1/2}X^{1/(2\gamma)}\Big)
                   \nonumber \\
 \ll & \,\, \mathscr{L}DXH_1^{-1}+D^2X^{1-1/\gamma}+D^{1/2}H_1^{1/2}X^{1/(2\gamma)}
                   \nonumber \\
 \ll & \,\, x^{\gamma-\eta}+x^{2\xi}X^{1-1/\gamma}+x^{\xi+\eta/2}\cdot x^{(1-\eta)/2}\ll x^\gamma\mathscr{L}^{-A},
\end{align*}
provided that
\begin{equation*}
\gamma>\frac{1}{2}+\xi, \qquad \gamma>2\xi.
\end{equation*}
For $S_1$, by Lemma \ref{expo-pair-gernal} with $(\kappa,\ell)=(\frac{1}{2},\frac{1}{2})$, we have
\begin{align*}
          S_1
 = & \,\, \sum_{d\leqslant D}\sum_{0<|h|\leqslant H_1}\frac{1}{2\pi ih}\sum_{n\sim X}
          \bigg(e\bigg(\frac{hn^{1/\gamma}}{d}\bigg)-e\bigg(\frac{h(n^{1/\gamma}-1)}{d}\bigg)\bigg)
                       \nonumber \\
 = & \,\, \sum_{d\leqslant D}\sum_{0<|h|\leqslant H_1}\frac{1-e(-h/d)}{2\pi ih}\sum_{n\sim X}
          e\bigg(\frac{hn^{1/\gamma}}{d}\bigg)
                       \nonumber \\
\ll & \,\, \sum_{d\leqslant D}\sum_{0<|h|\leqslant H_1}\frac{1}{d}\bigg|\sum_{n\sim X}
           e\bigg(\frac{hn^{1/\gamma}}{d}\bigg)\bigg|
                       \nonumber \\
\ll & \,\, \sum_{d\leqslant D}\sum_{0<|h|\leqslant H_1}\frac{1}{d}\bigg(\frac{d}{|h|}X^{1-1/\gamma}+
           \bigg(\frac{|h|}{d}\bigg)^{1/2}X^{1/(2\gamma)}\bigg)
                       \nonumber \\
\ll & \,\, \mathscr{L}DX^{1-1/\gamma}+H_1^{3/2}X^{1/(2\gamma)}
                       \nonumber \\
\ll & \,\, x^{\xi+\eta}+x^{3\xi/2+\eta}\cdot x^{(1-\eta)/2}\ll x^\gamma\mathscr{L}^{-A},
\end{align*}
provided that
\begin{equation}\label{suffi-condi-1}
 \gamma>\frac{1}{2}+\frac{3}{2}\xi.
\end{equation}
Therefore, it suffices to show that (\ref{mean-value-split}) still holds for
$x^{\gamma(1-\eta)}<X\leqslant x^{\gamma}$. It is easy to see that, for $\xi\leqslant\gamma(1-\eta)/2$, there holds
\begin{equation*}
 X^{\xi/\gamma}\leqslant D\leqslant X^{\xi/\gamma+\eta/2}.
\end{equation*}
Putting (\ref{psi-expan}) into the left--hand side of (\ref{mean-value-split}) with $H=H_2:=X^{\xi/\gamma+\eta}$, the contribution of the error term in (\ref{psi-expan}) to the left--hand side of (\ref{mean-value-split}) is
\begin{equation*}
 \sum_{d\leqslant D}\sum_{n\sim X}\Lambda(n)\bigg(g\bigg(\frac{n^{1/\gamma}}{d},H_2\bigg)
 +g\bigg(\frac{n^{1/\gamma}-1}{d},H_2\bigg)\bigg)=E_1+E_2,
\end{equation*}
say. For $E_1$ and $E_2$, uniformly, by Lemma \ref{expo-pair-gernal} with $(\kappa,\ell)=(\frac{1}{2},\frac{1}{2})$, we have
\begin{align*}
             E_1,E_2
 \ll & \,\, \mathscr{L}\sum_{d\leqslant D}\sum_{h=-\infty}^\infty|b_h|
            \bigg|\sum_{n\sim X}e\bigg(\frac{hn^{1/\gamma}}{d}\bigg)\bigg|
                       \nonumber \\
 \ll & \,\, \mathscr{L}\sum_{d\leqslant D}\Bigg(|b_0|X+\sum_{\substack{h=-\infty\\ h\not=0}}^\infty
            |b_h|\bigg(\frac{d}{|h|}X^{1-1/\gamma}+\bigg(\frac{|h|}{d}\bigg)^{1/2}X^{1/(2\gamma)}\bigg)\Bigg)
                       \nonumber \\
 \ll & \,\, \mathscr{L}^2DXH_2^{-1}+\mathscr{L}X^{1-1/\gamma}\sum_{d\leqslant D}d
            \sum_{\substack{h=-\infty\\ h\not=0}}^\infty\frac{1}{h^2}
                       \nonumber \\
     & \,\, +\mathscr{L}X^{1/(2\gamma)}\sum_{d\leqslant D}d^{-1/2}\Bigg(\sum_{0<|h|\leqslant H_2}|h|^{-1/2}+
            \sum_{|h|>H_2}\frac{H_2}{|h|^{3/2}}\Bigg)
                       \nonumber \\
 \ll & \,\, \mathscr{L}^2DXH_2^{-1}+\mathscr{L}D^2X^{1-1/\gamma}+\mathscr{L}X^{1/(2\gamma)}D^{1/2}H_2^{1/2}
            \ll x^{\gamma}\mathscr{L}^{-A},
\end{align*}
provided that
\begin{equation}\label{suffi-condi-2}
\gamma>\frac{1}{2}+\xi,\qquad \gamma>2\xi.
\end{equation}
Thus, it remains to show that
\begin{equation}\label{L-X-Main}
\mathcal{S}:=\sum_{d\leqslant D}\bigg|\sum_{0<|h|\leqslant H_2}\frac{1}{h}\sum_{n\sim X}\Lambda(n)
\bigg(e\bigg(\frac{hn^{1/\gamma}}{d}\bigg)-e\bigg(\frac{h(n^{1/\gamma}-1)}{d}\bigg)\bigg)\bigg|\ll x^{\gamma}\mathscr{L}^{-A}.
\end{equation}
For the left--hand side of (\ref{L-X-Main}), one has
\begin{align*}
           \mathcal{S}
 = & \,\,  \sum_{d\leqslant D}\bigg|\sum_{0<|h|\leqslant H_2}\frac{1-e(-h/d)}{h}\sum_{n\sim X}\Lambda(n)
           e\bigg(\frac{hn^{1/\gamma}}{d}\bigg)\bigg|
                  \nonumber \\
\ll & \,\, \sum_{d\leqslant D}\frac{1}{d}\sum_{0<h\leqslant H_2}\bigg|\sum_{n\sim X}\Lambda(n)
           e\bigg(\frac{hn^{1/\gamma}}{d}\bigg)\bigg|
                  \nonumber \\
  = & \,\, \sum_{d\leqslant D}\frac{1}{d}\sum_{0<h\leqslant H_2}\delta(d,h)\sum_{n\sim X}\Lambda(n)
           e\bigg(\frac{hn^{1/\gamma}}{d}\bigg)
                  \nonumber \\
  = & \,\, \sum_{n\sim X}\Lambda(n)\sum_{0<h\leqslant H_2}\sum_{d\leqslant D}\frac{\delta(d,h)}{d}
           e\bigg(\frac{hn^{1/\gamma}}{d}\bigg)
                  \nonumber \\
  =: & \,\, \sum_{n\sim X}\Lambda(n)G(n),
\end{align*}
where
\begin{equation*}
 G(n)=\sum_{0<h\leqslant H_2}\sum_{d\leqslant D}\frac{\delta(d,h)}{d}e\bigg(\frac{hn^{1/\gamma}}{d}\bigg),\qquad
 |\delta(d,h)|=1,
\end{equation*}
and we use the estimate $|1-e(-h/d)|\ll |h|d^{-1}$. Accordingly, in order to establish the estimate (\ref{L-X-Main}), it is sufficient to show that
\begin{equation*}
 \Bigg|\sum_{n\sim X}\Lambda(n)G(n)\Bigg|\ll X\mathscr{L}^{-A}.
\end{equation*}
A special case of the identity of Heath--Brown \cite{Heath-Brown-1982} is given by
\begin{equation*}
 -\frac{\zeta'}{\zeta}=-\frac{\zeta'}{\zeta}(1-Z\zeta)^3-\sum_{j=1}^3\binom{3}{j}(-1)^jZ^j\zeta^{j-1}(-\zeta'),
\end{equation*}
where $Z=Z(s)=\sum\limits_{m\leqslant X^{1/3}}\mu(m)m^{-s}$. From this we can decompose $\Lambda(n)$ for $n\sim X$ as
\begin{equation*}
 \Lambda(n)=\sum_{j=1}^3\binom{3}{j}(-1)^{j-1}\sum_{m_1\cdots m_{2j}=n}\mu(m_1)\cdots\mu(m_j)\log m_{2j}.
\end{equation*}
Thus, for any arithmetic function $G(n)$, we can express $\sum\limits_{n\sim X}\Lambda(n)G(n)$ in terms of sums
\begin{equation*}
 \mathop{\sum\,\,\cdots\,\,\sum}_{\substack{m_1\cdots m_{2j}\sim X\\ m_i\sim M_i}}\mu(m_1)\cdots\mu(m_j)
 (\log m_{2j})G(m_1\cdots m_{2j}),
\end{equation*}
where $1\leqslant j\leqslant3,\, M_1M_2\cdots M_{2j}\sim X$ and $M_1,\dots,M_j\leqslant X^{1/3}$. By dividing the $M_j$ into
two groups, we have
\begin{equation}\label{expo-fenjie}
 \Bigg|\sum_{n\sim X}\Lambda(n)G(n)\Bigg|\ll_\eta X^\eta\max\Bigg|\mathop{\sum\sum}_{\substack{m\ell\sim X\\ m\sim M}}
 a(m)b(\ell)G(mn)\Bigg|,
\end{equation}
where the maximum is taken over all bilinear forms with coefficients satisfying one of
\begin{equation}\label{type-II-coeff}
   |a(m)|\leqslant 1,\qquad \qquad |b(\ell)|\leqslant1,
\end{equation}
or
\begin{equation*}
   |a(m)|\leqslant 1,\qquad \qquad b(\ell)=1,
\end{equation*}
or
\begin{equation*}
   |a(m)|\leqslant 1,\qquad \qquad b(\ell)=\log \ell,
\end{equation*}
and also satisfying in all cases
\begin{equation}\label{gene-coeff-condi}
  M\leqslant X.
\end{equation}
We refer to the case (\ref{type-II-coeff}) as being Type II sums and to the other cases as being Type I sums and write for
brevity $\Sigma_{II}$ and $\Sigma_{I}$, respectively. By dividing the $M_j$ into two groups in a judicious fashion we are able to reduce the range of $M$ from (\ref{gene-coeff-condi}). In Section \ref{ex-section}, we shall give the estimate of these sums.

\section{Estimate of Exponential Sums}\label{ex-section}
In this section, we shall give the estimate of exponential sums which will be used in
proving Theorem \ref{Theorem-1}.

\subsection{The Estimate of Type II Sums}\label{subse-type-II}
We commence by breaking up the ranges for $\ell$ and $h$ into intervals $(L/2,L]$ and $(J/2,J]$ so
that $ML\asymp X$ and $1\ll J\leqslant H_2$. Then, for the Type II sums $\Sigma_{II}$, by splitting arguments, there holds
\begin{equation*}
\Sigma_{II}\ll\mathscr{L}^3\sum_{m\sim M}\Bigg|\sum_{\substack{\ell\sim L\\ m\ell\sim X}}b(\ell)
\sum_{h\sim J}\sum_{d\sim D}\frac{\delta(d,h)}{d}e\bigg(\frac{h(m\ell)^{1/\gamma}}{d}\bigg)\Bigg|.
\end{equation*}
Then we have
\begin{equation*}
  0<\frac{h\ell^{1/\gamma}}{d}\leqslant\frac{2JL^{1/\gamma}}{D}.
\end{equation*}
Denote by $T$ a parameter, which will be chosen later. We decompose the collection of available triples $(h,\ell,d)$ into sets
$\mathscr{S}_t\,(1\leqslant t\leqslant T)$, defined by
\begin{equation*}
\mathscr{S}_t=\Bigg\{(h,\ell,d):\,\,h\sim J,\, \ell\sim L, \,d\sim D,\,
\frac{2JL^{1/\gamma}(t-1)}{DT}<\frac{h\ell^{1/\gamma}}{d}\leqslant\frac{2JL^{1/\gamma}t}{DT}\Bigg\}.
\end{equation*}
Therefore, we have
\begin{equation*}
  \Sigma_{II}\ll \mathscr{L}^3\sum_{1\leqslant t\leqslant T}\sum_{m\sim M}
  \Bigg|\mathop{\sum\sum\sum}_{\substack{(h,\ell,d)\in\mathscr{S}_t\\
                 m\ell\sim X}}b(\ell)\frac{\delta(d,h)}{d}e\bigg(\frac{h(m\ell)^{1/\gamma}}{d}\bigg)\Bigg|,
\end{equation*}
which combined with Cauchy's inequality yields
\begin{align*}
            |\Sigma_{II}|^2
 \ll & \,\, \mathscr{L}^6TM\sum_{1\leqslant t\leqslant T}\sum_{m\sim M}
            \Bigg|\mathop{\sum\sum\sum}_{\substack{(h,\ell,d)\in\mathscr{S}_t\\ m\ell\sim X}}
            b(\ell)\frac{\delta(d,h)}{d}e\bigg(\frac{h(m\ell)^{1/\gamma}}{d}\bigg)\Bigg|^2
                   \nonumber \\
 \ll & \,\, \mathscr{L}^6TM\sum_{1\leqslant t\leqslant T}\mathop{\sum\sum\sum}_{(h_1,\ell_1,d_1)\in\mathscr{S}_t}
            \mathop{\sum\sum\sum}_{(h_2,\ell_2,d_2)\in\mathscr{S}_t}\frac{1}{d_1d_2}\Bigg|
            \sum_{\substack{m\sim M\\ m\ell_1\sim X \\m\ell_2\sim X}}
            e\bigg(\bigg(\frac{h_1\ell_1^{1/\gamma}}{d_1}-\frac{h_2\ell_2^{\gamma}}{d_2}\bigg)
            m^{1/\gamma}\bigg)\Bigg|
                   \nonumber \\
 \ll & \,\, \mathscr{L}^6TMD^{-2}\mathop{\sum_{h_1\sim J}\sum_{h_2\sim J}\sum_{\ell_1\sim L}\sum_{\ell_2\sim L}
            \sum_{d_1\sim D}\sum_{d_2\sim D}}_{|\lambda|\leqslant2JL^{1/\gamma}D^{-1}T^{-1}}\Bigg|
            \sum_{\substack{m\sim M\\ m\ell_1\sim X \\m\ell_2\sim X}}e(\lambda m^{1/\gamma})\Bigg|
\end{align*}
where
\begin{equation*}
  \lambda=\frac{h_1\ell_1^{1/\gamma}}{d_1}-\frac{h_2\ell_2^{1/\gamma}}{d_2}.
\end{equation*}
By Lemma \ref{expo-pair-gernal} with $(\kappa,\ell)=BA^3B(0,1)=(\frac{11}{30},\frac{8}{15})$, we obtain
\begin{align*}
 |\Sigma_{II}|^2\ll \mathscr{L}^6TMD^{-2}\mathop{\sum_{h_1\sim J}\sum_{h_2\sim J}\sum_{\ell_1\sim L}
 \sum_{\ell_2\sim L}\sum_{d_1\sim D}\sum_{d_2\sim D}}_{|\lambda|\leqslant2JL^{1/\gamma}D^{-1}T^{-1}}
 \min\Big\{M,|\lambda|^{-1}M^{1-1/\gamma}+|\lambda|^{11/30}M^{11/(30\gamma)+1/6}\Big\}
\end{align*}
If $|\lambda|\leqslant M^{-1/\gamma}$, then $M\ll|\lambda|^{-1}M^{1-1/\gamma}$. By Lemma \ref{latticepoints}, the contribution of the $M$ term to $|\Sigma_{II}|^2$ is
\begin{align}\label{lambda-s-upp}
 \ll & \,\, \mathscr{L}^6TM^2D^{-2}\mathop{\sum_{h_1\sim J}\sum_{h_2\sim J}\sum_{\ell_1\sim L}
            \sum_{\ell_2\sim L}\sum_{d_1\sim D}\sum_{d_2\sim D}}_{|\lambda|\leqslant M^{-1/\gamma}}1
                        \nonumber \\
 \ll & \,\, X^\eta TM^2D^{-2}\big(JDL+M^{-1/\gamma}JD^3L^{2-1/\gamma}\big)
                        \nonumber \\
 \ll & \,\, X^\eta \big(XMTJD^{-1}+X^{2-1/\gamma}TJD\big).
\end{align}
If $|\lambda|>M^{-1/\gamma}$, then $M>|\lambda|^{-1}M^{1-1/\gamma}$. It follows from the splitting
argument that the contribution of the $|\lambda|^{-1}M^{1-1/\gamma}$ term to $|\Sigma_{II}|^2$ is
\begin{align}\label{lambda-l-upp}
 \ll & \,\, \mathscr{L}^7TM^{2-1/\gamma}D^{-2}
            \max_{M^{-1/\gamma}\leqslant\Delta\leqslant2JL^{1/\gamma}D^{-1}T^{-1}}\mathscr{N}(\Delta)\Delta^{-1}
                    \nonumber \\
 \ll & \,\, X^\eta TM^{2-1/\gamma}D^{-2}\big(JDLM^{1/\gamma}+JD^3L^{2-1/\gamma}\big)
                    \nonumber \\
 \ll & \,\, X^\eta\big(XMTJD^{-1}+X^{2-1/\gamma}TJD\big).
\end{align}
Moreover, the total contribution of the term $|\lambda|^{11/30}M^{11/(30\gamma)+1/6}$ to $|\Sigma_{II}|^2$ is
\begin{align*}
\ll & \,\, |\lambda|^{11/30}M^{11/(30\gamma)+1/6}\cdot\mathscr{L}^6TMD^{-2}
           \cdot\mathscr{N}(2JL^{1/\gamma}D^{-1}T^{-1})
                \nonumber \\
\ll & \,\, X^\eta\big(JL^{1/\gamma}D^{-1}T^{-1}\big)^{11/30}TM^{11/(30\gamma)+7/6}D^{-2}
           \big(JDL+JL^{1/\gamma}D^{-1}T^{-1}\cdot JD^3L^{2-1/\gamma}\big)
                \nonumber \\
\ll & \,\, X^\eta\big(X^{11/(30\gamma)+2}M^{-5/6}J^{71/30}D^{-11/30}T^{-11/30}+
           X^{11/(30\gamma)+1}M^{1/6}J^{41/30}D^{-41/30}T^{19/30}\big),
\end{align*}
which combined with (\ref{lambda-s-upp}) and (\ref{lambda-l-upp}) yields
\begin{align}\label{Sigma-2-fi-1}
           |\Sigma_{II}|^{2}
\ll & \,\, X^\eta\big(X^{2-1/\gamma}TJD+XMTJD^{-1}+X^{11/(30\gamma)+2}M^{-5/6}J^{71/30}D^{-11/30}T^{-11/30}
                 \nonumber \\
    & \,\, \qquad+X^{11/(30\gamma)+1}M^{1/6}J^{41/30}D^{-41/30}T^{19/30}\big).
\end{align}
Now, we choose
\begin{equation}\label{T-chosen}
 T=\big[X^{(30\gamma+11)/(41\gamma)}M^{-55/41}D^{60/41}\big].
\end{equation}
Trivially, one has $T>1$ provided that $M\ll X^{(11+60\xi+30\gamma)/(55\gamma)-\eta}$. Inserting (\ref{T-chosen}) into (\ref{Sigma-2-fi-1}),  we obtain
\begin{align*}
       |\Sigma_{II}|^2
\ll & \,\, X^\eta\Big(M^{-55/41}X^{(112\gamma-30)/(41\gamma)}JD^{101/41}+
           M^{-14/41}X^{(71\gamma+11)/(41\gamma)}JD^{19/41}
                   \nonumber \\
    & \,\,  +M^{-14/41}X^{(71\gamma+11)/(41\gamma)}J^{71/30}D^{-1111/1230}
           +M^{-28/41}X^{(60\gamma+22)/(41\gamma)}J^{41/30}D^{-541/1230}\Big),
\end{align*}
which combined with $J\ll H_2=X^{\xi/\gamma+\eta}$ and $X^{\xi/\gamma}\leqslant D\leqslant X^{\xi/\gamma+\eta/2}$ yields
\begin{align*}
           |\Sigma_{II}|^2
\ll & \,\, X^\eta\Big(M^{-55/41}X^{(112\gamma+142\xi-30)/(41\gamma)}+M^{-14/41}X^{(71\gamma+60\xi+11)/(41\gamma)}
                 \nonumber \\
    & \,\, \qquad +M^{-28/41}X^{(60\gamma+38\xi+22)/(41\gamma)}\Big).
\end{align*}
According to above arguments, we deduce the following lemma.

\begin{lemma}\label{Type-II-es}
Suppose that $\frac{1}{2}<\gamma<1$ and $0<\xi\leqslant\gamma(1-\eta)/2$ satisfy the condition
\begin{equation}\label{Type-II-condi}
 \gamma>\frac{11}{25}+\frac{12}{5}\xi+\eta.
\end{equation}
If there holds
\begin{equation*}
 X^{(11+60\xi-11\gamma)/(14\gamma)+\eta}\ll M\ll X^{(11+60\xi+30\gamma)/(55\gamma)-\eta},
\end{equation*}
then we have
\begin{equation*}
 \Sigma_{II}\ll X^{1-\eta}.
\end{equation*}
\end{lemma}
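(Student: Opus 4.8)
The analytic heart of the argument has already been assembled in the computation preceding the statement: after splitting the ranges of $\ell$ and $h$, introducing the parameter $T$ via the sets $\mathscr{S}_t$, applying Cauchy's inequality, estimating the inner sum over $m$ by Lemma \ref{expo-pair-gernal} with the exponent pair $BA^3B(0,1)=(\tfrac{11}{30},\tfrac{8}{15})$, bounding the relevant lattice-point count $\mathscr{N}(\Delta)$ by Lemma \ref{latticepoints}, and finally inserting the choice (\ref{T-chosen}) of $T$ together with $J\ll H_2=X^{\xi/\gamma+\eta}$ and $X^{\xi/\gamma}\leqslant D\leqslant X^{\xi/\gamma+\eta/2}$, one is left with
\begin{equation*}
|\Sigma_{II}|^2\ll X^\eta\Big(M^{-55/41}X^{(112\gamma+142\xi-30)/(41\gamma)}+M^{-14/41}X^{(71\gamma+60\xi+11)/(41\gamma)}+M^{-28/41}X^{(60\gamma+38\xi+22)/(41\gamma)}\Big).
\end{equation*}
The plan is therefore to prove that, under the stated hypotheses, each of the three terms on the right is $\ll X^{2-\eta}$, so that $\Sigma_{II}\ll X^{1-\eta}$ on taking square roots; the prefactor $X^\eta$ is harmless because, in the convention of this paper, the $\eta$ occurring in the lower bound for $M$ may be taken larger than it.

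First I would note that the choice (\ref{T-chosen}) is admissible, i.e.\ $T\geqslant1$: since $D\geqslant X^{\xi/\gamma}$, the bracketed quantity in (\ref{T-chosen}) is $\gg X^{(30\gamma+11+60\xi)/(41\gamma)}M^{-55/41}$, which tends to infinity precisely under the assumed upper bound $M\ll X^{(11+60\xi+30\gamma)/(55\gamma)-\eta}$. Next I would record that the condition (\ref{Type-II-condi}) is exactly what makes the $M$-interval in the lemma non-empty: cross-multiplying,
\begin{equation*}
\frac{11+60\xi-11\gamma}{14\gamma}<\frac{11+60\xi+30\gamma}{55\gamma}\iff 451+2460\xi<1025\gamma\iff\gamma>\frac{11}{25}+\frac{12}{5}\xi,
\end{equation*}
and the extra $+\eta$ in (\ref{Type-II-condi}) absorbs the $\eta$-perturbations in the two endpoints.

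The decisive term is the middle one. Subtracting $2$ from its $X$-exponent gives $(71\gamma+60\xi+11)/(41\gamma)-2=(11+60\xi-11\gamma)/(41\gamma)$, so $M^{-14/41}X^{(71\gamma+60\xi+11)/(41\gamma)}\ll X^{2-\eta}$ holds provided $M\gg X^{(11+60\xi-11\gamma)/(14\gamma)+\eta}$, which is precisely the lower bound on $M$ in the hypothesis; this is the source of the coefficient $\tfrac{12}{5}$ and of the shape of the $M$-range. For the other two terms I would perform the same subtraction: the first term requires $M\gg X^{(30\gamma+142\xi-30)/(55\gamma)}$ and the third requires $M\gg X^{(11+19\xi-11\gamma)/(14\gamma)}$. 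The third requirement is weaker than the controlling one because $\xi>0$; the first, compared with $(11+60\xi-11\gamma)/(14\gamma)$ by cross-multiplication, reduces to $1025(\gamma-1)\leqslant1312\xi$, which is automatic since $\gamma<1$ and $\xi>0$. Hence all three terms are $\ll X^{2-\eta}$, and the lemma follows.

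I do not expect a genuine obstacle at this stage, the substantive estimates being already in hand; the only thing needing care is the bookkeeping of the $X$-exponents (and of the several occurrences of $\eta$), so that the three separate constraints on $M$ collapse to the single interval stated in the lemma. If one had to rebuild the preceding derivation, the genuinely delicate points would instead be the optimisation in $T$ leading to (\ref{T-chosen}) and the control of the first term, whose $\xi$-coefficient $142$ is by far the largest; it is only the slack coming from $\gamma<1$ that keeps that term below $X^2$.
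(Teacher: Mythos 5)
Your proposal is correct and follows essentially the same route as the paper: the paper's proof of this lemma is precisely the computation preceding it (culminating in the three-term bound for $|\Sigma_{II}|^2$ after inserting the choice of $T$ and the bounds $J\ll H_2$, $X^{\xi/\gamma}\leqslant D\leqslant X^{\xi/\gamma+\eta/2}$), with the final exponent bookkeeping left implicit. Your verification of that bookkeeping — $T\geqslant1$ under the upper bound on $M$, the middle term forcing the lower bound $(11+60\xi-11\gamma)/(14\gamma)$, the other two terms giving weaker constraints (the cross-multiplications to $1025(\gamma-1)\leqslant1312\xi$ and to $19\xi<60\xi$ are right), and condition (\ref{Type-II-condi}) guaranteeing a non-empty $M$-range — is accurate.
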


\subsection{The Estimate of Type I Sums}
As in Subsection \ref{subse-type-II}, we begin by breaking up the range for $\ell$ into intervals $(L/2,L]$,
such that $MN\asymp X$. Then we change the order of summation and derive that
\begin{equation*}
\Sigma_I\ll\sum_{d\leqslant D}\frac{1}{d}\sum_{0<|h|\leqslant H_2}\sum_{m\sim M}\sum_{\ell\sim L}a(m)b(\ell)
\delta(d,h)e\bigg(\frac{h(m\ell)^{1/\gamma}}{d}\bigg).
\end{equation*}
According to the splitting argument, it suffices to show that, for fixed $d\sim D$, there holds
\begin{equation}\label{Type-I-inner}
\sum_{h\sim H_2}\sum_{m\sim M}\sum_{\ell\sim L}a(m)b(\ell)\delta(d,h)e\bigg(\frac{h(m\ell)^{1/\gamma}}{d}\bigg)
\ll X^{1-\eta}.
\end{equation}
Denote by $\mathcal{K}_d$ the left--hand side of (\ref{Type-I-inner}). By partial summation, one has
\begin{align*}
\mathcal{K}_d=\sum_{h\sim H_2}\sum_{m\sim M}a(m)\delta(d,h)\sum_{\ell\sim L}b(\ell)
          e\bigg(\frac{h(m\ell)^{1/\gamma}}{d}\bigg)
          \ll \mathscr{L}\sum_{h\sim H_2}\sum_{m\sim M}\Bigg|\sum_{\ell\sim L}
           e\bigg(\frac{h(m\ell)^{1/\gamma}}{d}\bigg)\Bigg|.
\end{align*}
It follows from Lemma \ref{Robert-Sargos-lemma} with parameters $(N,H,M)=(H_2,M,L)$ that
\begin{equation*}
\mathcal{K}_d\ll(MH_2L)^{1+\eta}\bigg(\bigg(\frac{Y}{MH_2L^2}\bigg)^{1/4}+\frac{1}{L^{1/2}}+\frac{1}{Y}\bigg),
\end{equation*}
where $Y=d^{-1}H_2M^{1/\gamma}L^{1/\gamma}$. Since $d\sim D\ll H_2$, we have $Y^{-1}\ll L^{-1/2}$. If
$M\ll X^{(1-\xi)/\gamma-\eta}$, then
\begin{equation*}
 \frac{1}{L^{1/2}}\ll\bigg(\frac{Y}{MH_2L^2}\bigg)^{1/4}.
\end{equation*}
Accordingly, under the condition $M\ll X^{(1-\xi)/\gamma-\eta}$, there holds
\begin{align*}
            \mathcal{K}_d
 \ll & \,\, X^\eta(MH_2L)\bigg(\frac{Y}{MH_2L^2}\bigg)^{1/4}
            \ll M^{(3\gamma+1)/(4\gamma)}L^{(2\gamma+1)/(4\gamma)}H_2d^{-1/4}
                     \nonumber \\
 \ll & \,\, X^{1/(4\gamma)+1/2}M^{1/4}H_2D^{-1/4}\ll X^{(3\xi+1)/(4\gamma)+1/2+\eta}M^{1/4}\ll X^{1-\eta},
\end{align*}
provided that $M\ll X^{2-(3\xi+1)/\gamma-\eta}$. By noting that $(1-\xi)/\gamma>2-(3\xi+1)/\gamma$, we obtain the
following lemma.

\begin{lemma}\label{Type-I-es}
Suppose that M satisfies the condition
\begin{equation*}
 M\ll X^{2-(3\xi+1)/\gamma-\eta}.
\end{equation*}
Then we have
\begin{equation*}
 \Sigma_{I}\ll X^{1-\eta}.
\end{equation*}
\end{lemma}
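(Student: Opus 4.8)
The plan is to reduce everything to the inner exponential sum estimate \eqref{Type-I-inner} and then feed the Robert--Sargos bound of Lemma \ref{Robert-Sargos-lemma} into it. First I would break the range of $\ell$ into dyadic blocks $(L/2,L]$ with $ML \asymp X$; since there are only $O(\mathscr{L})$ such blocks, it suffices to bound each block, and after a harmless splitting in $d\sim D$ we are left with showing $\mathcal{K}_d\ll X^{1-\eta}$ for each fixed $d\sim D$, where $\mathcal{K}_d$ is the triple sum over $h\sim H_2$, $m\sim M$, $\ell\sim L$ of $a(m)b(\ell)\delta(d,h)e(h(m\ell)^{1/\gamma}/d)$. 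The coefficients $a(m),\delta(d,h)$ are bounded by $1$ and $b(\ell)$ is either bounded by $1$ or equals $\log\ell$; in the latter case partial summation in $\ell$ removes the logarithmic weight at the cost of a factor $\mathscr{L}$, so in all cases we may assume $|b(\ell)|\le 1$ and indeed replace $b(\ell)$ by $1$ after absorbing that factor into $X^\eta$.

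Next I would apply Lemma \ref{Robert-Sargos-lemma} with the dictionary $(N,H,M)_{\text{Lemma}} = (H_2, M, L)$ and phase exponent $X_{\text{Lemma}} = Y := d^{-1}H_2 M^{1/\gamma}L^{1/\gamma}$, noting that $(m\ell)^{1/\gamma} = m^{1/\gamma}\ell^{1/\gamma}$ is exactly of the monomial shape $X_{\text{Lemma}}\, m^\alpha h^\beta \ell^\gamma/(M^\alpha H^\beta L^\gamma)$ required there. This gives
\begin{equation*}
\mathcal{K}_d \ll (MH_2L)^{1+\eta}\Bigl(\bigl(Y/(MH_2L^2)\bigr)^{1/4} + L^{-1/2} + Y^{-1}\Bigr).
\end{equation*}
Then I would dispose of the two lower-order terms: since $d\sim D\ll H_2$ we have $Y = d^{-1}H_2 M^{1/\gamma}L^{1/\gamma}\gg M^{1/\gamma}L^{1/\gamma}\gg L$, so $Y^{-1}\ll L^{-1/2}$; and a short computation shows $L^{-1/2}\ll (Y/(MH_2L^2))^{1/4}$ precisely when $M\ll X^{(1-\xi)/\gamma-\eta}$ (using $H_2 = X^{\xi/\gamma+\eta}$ and $ML\asymp X$). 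Under that hypothesis the first term dominates, and substituting $Y$, $H_2\asymp X^{\xi/\gamma}$, $D\asymp X^{\xi/\gamma}$, $L\asymp X/M$ into $X^\eta(MH_2L)(Y/(MH_2L^2))^{1/4}$ and simplifying yields $\mathcal{K}_d \ll X^{(3\xi+1)/(4\gamma)+1/2+\eta}M^{1/4}$, which is $\ll X^{1-\eta}$ as soon as $M \ll X^{2-(3\xi+1)/\gamma-\eta}$. Finally I would observe that $(1-\xi)/\gamma > 2-(3\xi+1)/\gamma$ is equivalent to $4\xi/\gamma > 1$, i.e. it holds automatically in the relevant range (or follows from the constraints on $\xi$ already in force), so the single hypothesis $M\ll X^{2-(3\xi+1)/\gamma-\eta}$ subsumes the auxiliary condition $M\ll X^{(1-\xi)/\gamma-\eta}$, and summing the $O(\mathscr{L}^2)$ dyadic pieces gives $\Sigma_I\ll X^{1-\eta}$.

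The main obstacle is not any single estimate — Lemma \ref{Robert-Sargos-lemma} does the analytic work — but rather the bookkeeping of exponents: one must track the three parameters $M,L,H_2,D$ (with the two independent relations $ML\asymp X$ and $D\asymp H_2\asymp X^{\xi/\gamma}$) through the fourth-root term and verify that the algebra collapses exactly to the clean exponent $(3\xi+1)/(4\gamma)+1/2$. The one genuinely substantive point to check carefully is the claim $Y^{-1}\ll L^{-1/2}$, which relies on $d\ll H_2$; this in turn needs the parameter choice $\xi \le \gamma(1-\eta)/2$ guaranteeing $D = X^\xi \ll X^{\xi/\gamma+\eta} = H_2$, so I would make sure that hypothesis is invoked. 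Everything else is routine dyadic summation and comparison of exponents, and the final inequality $(1-\xi)/\gamma > 2-(3\xi+1)/\gamma$ is a one-line manipulation.
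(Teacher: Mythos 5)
Your argument is correct and follows essentially the same route as the paper: dyadic splitting in $\ell$, $h$, $d$, partial summation to strip $b(\ell)$, Lemma \ref{Robert-Sargos-lemma} with $(N,H,M)=(H_2,M,L)$ and $Y=d^{-1}H_2M^{1/\gamma}L^{1/\gamma}$, then the same comparisons $Y^{-1}\ll L^{-1/2}\ll \big(Y/(MH_2L^2)\big)^{1/4}$ (valid for $M\ll X^{(1-\xi)/\gamma-\eta}$) leading to $\mathcal{K}_d\ll X^{(3\xi+1)/(4\gamma)+1/2+\eta}M^{1/4}\ll X^{1-\eta}$. The only slip is in your final check: $(1-\xi)/\gamma>2-(3\xi+1)/\gamma$ is equivalent to $\gamma<1+\xi$, hence automatic for $\gamma<1$, and not to $4\xi/\gamma>1$, which in fact fails for the paper's choice $\xi=(140\gamma-99)/270-\eta$; since the needed inequality holds regardless, this does not affect the proof.
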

Now, we shall finish establishing the mean value theorem (\ref{mean-value-equilavent}). To deal with the right--hand side of (\ref{expo-fenjie}), the following lemma is required.
\begin{lemma}\label{exponen-fenjie}
If we have real numbers $0<\mathfrak{a}<1,\,0<\mathfrak{b}<\mathfrak{c}<1$  satisfying
\begin{equation*}
\mathfrak{b}<\frac{2}{3},\qquad 1-\mathfrak{c}<\mathfrak{c}-\mathfrak{b}, \qquad 1-\mathfrak{a}<\frac{\mathfrak{c}}{2},
\end{equation*}
then (\ref{expo-fenjie}) still holds when (\ref{gene-coeff-condi}) is replaced by the conditions
\begin{equation*}
M\leqslant X^\mathfrak{a} \qquad \textrm{for Type I sums},
\end{equation*}
and
\begin{equation*}
X^\mathfrak{b}\leqslant M\leqslant X^\mathfrak{c} \qquad \textrm{for Type II sums}.
\end{equation*}
\end{lemma}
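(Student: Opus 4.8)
The statement is the standard combinatorial device of Heath--Brown that rebalances the variables in a linear decomposition of $\Lambda(n)$. The starting point is \eqref{expo-fenjie}: every term on the right is a product $\mathop{\sum\sum}_{\substack{m\ell\sim X\\ m\sim M}}a(m)b(\ell)G(m\ell)$ built from an underlying product $m_1\cdots m_{2j}\sim X$ with $j\leqslant 3$, $M_i\leqslant X^{1/3}$ for $1\leqslant i\leqslant j$, and $M_1\cdots M_{2j}\sim X$, where $M$ is a product of some subcollection of the $M_i$ and $L=M_1\cdots M_{2j}/M$. The goal is to show that by choosing \emph{which} of the $M_i$ to absorb into $m$ and which into $\ell$, we can always arrange that either $M\leqslant X^{\mathfrak a}$ (and then the whole thing is a Type I sum), or $X^{\mathfrak b}\leqslant M\leqslant X^{\mathfrak c}$ (a Type II sum, both coefficients of size at most $1$, with $b(\ell)$ genuinely arbitrary rather than $1$ or $\log\ell$). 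The key point is that the $M_i$'s for $i\leqslant j$ are all $\leqslant X^{1/3}$, so they are ``small bricks,'' while the remaining factors $M_{j+1},\dots,M_{2j}$ can be large.

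First I would set up the bookkeeping: order all the prime factors' worth of mass so that the ``short'' variables (those carrying $\mu$, of size $\leqslant X^{1/3}$) are listed first, then greedily accumulate them into $m$. If at some stage the running product first exceeds $X^{\mathfrak b}$, then before that step it was $<X^{\mathfrak b}$, and after multiplying by one more short brick (size $\leqslant X^{1/3}$) it is $<X^{\mathfrak b+1/3}$; so if $\mathfrak b+1/3\leqslant\mathfrak c$, i.e. $1-\mathfrak c\leqslant\mathfrak c-\mathfrak b+(2/3-\mathfrak c)$ --- which is implied by $\mathfrak b<2/3$ together with $1-\mathfrak c<\mathfrak c-\mathfrak b$ --- we land in the Type II window $[X^{\mathfrak b},X^{\mathfrak c}]$ and are done with that case. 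The remaining situation is that even after absorbing \emph{all} the short bricks, the product is still $<X^{\mathfrak b}$; then the complementary product $L$ exceeds $X^{1-\mathfrak b}$, and all of $L$'s mass lies in the ``long'' factors $M_{j+1},\dots,M_{2j}$. Here I would split off long factors one at a time from $\ell$ into $m$: each time $m$ grows, and since $L>X^{1-\mathfrak b}$ we can keep going; the first moment $m$ lands in $[X^{\mathfrak b},X^{\mathfrak c}]$ we stop --- the only way to overshoot from below $X^{\mathfrak b}$ to above $X^{\mathfrak c}$ in one step is for a single long factor to have size $>X^{\mathfrak c-\mathfrak b}$, but then \emph{that} factor alone, put into $\ell$ with everything else into $m$, gives $M>X^{1-\mathfrak c}$ and $M$ small enough, and $1-\mathfrak c<\mathfrak c-\mathfrak b<\mathfrak c$ plus $1-\mathfrak c<1-\mathfrak b$ needs checking against $\mathfrak a$; this is where the condition $1-\mathfrak a<\mathfrak c/2$ enters, guaranteeing $1-\mathfrak c$ (and more generally any residual small $M$) falls under $X^{\mathfrak a}$ so the term is Type I.

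The combinatorial core is a case analysis on $j\in\{1,2,3\}$ and on the sizes of the individual $M_i$, reducing in each case to one of: (a) $M$ already $\leqslant X^{\mathfrak a}$, Type I; (b) $M$ landing in $[X^{\mathfrak b},X^{\mathfrak c}]$ by the greedy argument above, Type II; or (c) a single large factor forcing the split. I expect the main obstacle to be verifying that the three inequalities $\mathfrak b<2/3$, $1-\mathfrak c<\mathfrak c-\mathfrak b$, $1-\mathfrak a<\mathfrak c/2$ are \emph{exactly} what is needed in every branch --- in particular handling $j=3$ with three short factors each possibly close to $X^{1/3}$ (whose product can be as large as $X$, so the whole of $X$ could in principle be short mass), and confirming that the ``one more brick of size $\leqslant X^{1/3}$'' overshoot is absorbed by $\mathfrak b+1/3\leqslant\mathfrak c$, which rearranges to $1-\mathfrak c\leqslant \mathfrak c-\mathfrak b$ after using $2/3\geqslant\mathfrak c$... actually $\mathfrak b+1/3\leqslant\mathfrak c\iff \mathfrak b\leqslant\mathfrak c-1/3$, and since $\mathfrak c<1$ this follows from $1-\mathfrak c<\mathfrak c-\mathfrak b$ precisely when $\mathfrak c>2/3$, so the two regimes $\mathfrak c\lessgtr 2/3$ must be treated separately, the $\mathfrak c\leqslant 2/3$ case relying on $\mathfrak b<2/3$ to keep the short-brick accumulation from ever needing to jump the gap. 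Once the case bookkeeping is laid out this way the argument is routine; I would present it as: reduce to a single product decomposition via \eqref{expo-fenjie}, then run the greedy accumulation with the overshoot estimate, then dispose of the all-short-mass residual case using $1-\mathfrak a<\mathfrak c/2$.
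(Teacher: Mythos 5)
The paper does not actually prove this lemma; it simply cites Proposition~1 of Balog and Friedlander \cite{Balog-Friedlander-1992}, so what you are offering is a reproof of that combinatorial decomposition. Your sketch, however, breaks down exactly in the parameter regime this paper needs (at $\gamma$ near $1$ with $\xi$ as in (\ref{level-def}) one has $\mathfrak b\approx0.65$, $\mathfrak c\approx0.91$, $\mathfrak a\approx0.54$). First, your greedy step rests on the claim that $\mathfrak b+1/3\leqslant\mathfrak c$ is implied by $\mathfrak b<2/3$ and $1-\mathfrak c<\mathfrak c-\mathfrak b$; it is not (here $\mathfrak c-\mathfrak b\approx0.26<1/3$), and your attempted repair reverses the regimes: $1-\mathfrak c<\mathfrak c-\mathfrak b$ forces $\mathfrak c-\mathfrak b>1/3$ only when $\mathfrak c\leqslant2/3$, whereas here $\mathfrak c>2/3$. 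So a single factor (short or long) can jump over the window $[X^{\mathfrak b},X^{\mathfrak c}]$, and your procedure has no valid continuation at that point. Second, your treatment of such a jump is wrong: knowing only that the jumping factor exceeds $X^{\mathfrak c-\mathfrak b}$, putting it alone into $\ell$ gives $M\ll X^{1-\mathfrak c+\mathfrak b}$, which here is about $X^{0.74}$, far above $X^{\mathfrak a}$; and you invoke $1-\mathfrak a<\mathfrak c/2$ only through the much weaker consequence $1-\mathfrak c<\mathfrak a$, which cannot be its real role (with the choice (\ref{level-def}) the inequality $1-\mathfrak a<\mathfrak c/2$ is essentially an equality, so it must be used at full strength somewhere).

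The missing ideas are the following. (i) Complementation: since $m\ell\sim X$, a subproduct lying in $[X^{1-\mathfrak c},X^{1-\mathfrak b}]$ also produces a Type II sum; combined with $1-\mathfrak c<\mathfrak c-\mathfrak b$ this upgrades any ``big'' factor (one exceeding $X^{\mathfrak c-\mathfrak b}$) to a factor exceeding $X^{1-\mathfrak b}$, unless a Type II grouping already exists. (ii) Since $1-\mathfrak b>1/3$ (this is where $\mathfrak b<2/3$ genuinely enters), such big factors are necessarily unweighted Heath--Brown variables and there are at most two of them; if every factor is at most $X^{\mathfrak c-\mathfrak b}$, or if there is exactly one big factor and it is below $X^{\mathfrak b}$, the greedy walk cannot jump the window and lands in it, giving Type II. (iii) In the remaining case of two big unweighted factors each below $X^{\mathfrak b}$, their product exceeds $X^{2-2\mathfrak b}>X^{\mathfrak b}$ (again $\mathfrak b<2/3$), hence, avoiding the window, exceeds $X^{\mathfrak c}$; the larger of the two then exceeds $X^{\mathfrak c/2}>X^{1-\mathfrak a}$, and taking it alone as $\ell$ gives a Type I sum with $M\leqslant X^{\mathfrak a}$. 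This two-big-factor case, which your case analysis never isolates, is precisely where $1-\mathfrak a<\mathfrak c/2$ is needed; a factor exceeding $X^{\mathfrak c}$ is handled directly since $1-\mathfrak c<\mathfrak a$. Without (i)--(iii) your argument does not close, even though the statement itself is correct.
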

\begin{proof}
 See Proposition 1 of Balog and Friedlander \cite{Balog-Friedlander-1992}.  $\hfill$
\end{proof}
Combining Lemma \ref{Type-II-es}, Lemma \ref{Type-I-es} and Lemma \ref{exponen-fenjie}, we take
\begin{align*}
\mathfrak{a}= & \,\, 2-(3\xi+1)/\gamma-\eta,
                         \nonumber \\
\mathfrak{b}= & \,\, (11+60\xi-11\gamma)/(14\gamma)+\eta,
                         \nonumber \\
\mathfrak{c}= & \,\, (11+60\xi+30\gamma)/(55\gamma)-\eta.
\end{align*}
Thus, we are able to choose
\begin{equation}\label{level-def}
 \xi=\xi(\gamma)=(140\gamma-99)/270-\eta.
\end{equation}
For $99/140<\gamma<1$, it is easy to check the conditions  (\ref{suffi-condi-1}), (\ref{suffi-condi-2}) and (\ref{Type-II-condi}), as well as the inequalities in Lemma \ref{exponen-fenjie}, hold. Accordingly, (\ref{mean-value-equilavent}) holds for $\xi$ defined as in (\ref{level-def}).

\section{Proof of Theorem \ref{Theorem-1}}
 In this section, we shall prove Theorem \ref{Theorem-1} according to the weighted sieve of
Richert \cite{Richert-1969} and the method of Chen \cite{Chen-1973}.

We consider the weighted sum
\begin{equation*}
 W\big(\mathscr{A},x^{1/17.41}\big)=\sum_{\substack{a\in\mathscr{A}\\ (a,P(x^{1/17.41}))=1}}
 \Bigg(1-\lambda\sum_{\substack{x^{1/17.41}\leqslant p<x^{1/u}\\ p|a}}\bigg(1-\frac{u\log p}{\log x}\bigg)\Bigg),
\end{equation*}
where $\lambda=(9-u-\varepsilon)^{-1},u=\xi^{-1}+\varepsilon$, $\xi$ is defined as in (\ref{level-def}), and
\begin{equation*}
 P(z)=\prod_{p<z}p.
\end{equation*}
For convenience, we write
\begin{equation*}
 \mathscr{W}_a=1-\lambda\sum_{\substack{x^{1/17.41}\leqslant p<x^{1/u}\\ p|a}}\bigg(1-\frac{u\log p}{\log x}\bigg).
\end{equation*}
Then we have
\begin{equation}\label{W-fenjie}
     W\big(\mathscr{A},x^{1/17.41}\big)
   =\sum_{\substack{a\in\mathscr{A}\\ (a,P(x^{1/17.41}))=1\\ \Omega(a)\leqslant7}}\mathscr{W}_a
   +\sum_{\substack{a\in\mathscr{A}\\ (a,P(x^{1/17.41}))=1\\ \Omega(a)=8\\ \mu(a)\not=0}}\mathscr{W}_a
   +\sum_{\substack{a\in\mathscr{A}\\ (a,P(x^{1/17.41}))=1\\ \Omega(a)\geqslant9\\ \mu(a)\not=0}}\mathscr{W}_a
   +\sum_{\substack{a\in\mathscr{A}\\ (a,P(x^{1/17.41}))=1\\ \Omega(a)\geqslant8\\ \mu(a)=0}}\mathscr{W}_a.
\end{equation}
Trivially, we have
\begin{align}\label{n-s-free>8}
             \sum_{\substack{a\in\mathscr{A}\\ (a,P(x^{1/17.41}))=1\\ \Omega(a)\geqslant8\\ \mu(a)=0}}\mathscr{W}_a
 \ll  & \,\, \sum_{\substack{a\in\mathscr{A}\\ (a,P(x^{1/17.41}))=1\\ \mu(a)=0}}\tau(a)
             \ll x^\varepsilon \sum_{x^{1/17.41}\leqslant p_1\leqslant x^{1/2}}
             \sum_{\substack{p\leqslant x^\gamma\\ [p^{1/\gamma}]\equiv0\!\!\!\!\!\pmod {p_1^2}}}1
                                 \nonumber \\
 \ll  & \,\, x^\varepsilon\sum_{x^{1/17.41}\leqslant p_1\leqslant x^{1/2}}\bigg(\frac{x}{p_1^2}+1\bigg)
         \ll  x^\varepsilon\big(x^{1-1/17.41}+x^{1/2}\big)\ll x^{16.41/17.41+\varepsilon}.
\end{align}
For given integer $a$ with $a\leqslant x,(a,P(x^{1/17.41}))=1$ and $\mu(a)\not=0$, the weight $\mathscr{W}_a$ in the sum $W(\mathscr{A},x^{1/17.41})$ satisfies
\begin{align}\label{weight-upper}
           1-\lambda\sum_{\substack{x^{1/17.41}\leqslant p<x^{1/u}\\ p|a}}\bigg(1-\frac{u\log p}{\log x}\bigg)
\leqslant & \,\,\lambda\bigg(\frac{1}{\lambda}-\sum_{p|a}\bigg(1-\frac{u\log p}{\log x}\bigg)\bigg)
                     \nonumber \\
= & \,\, \lambda\bigg(9-u-\varepsilon-\Omega(a)+\frac{u\log a}{\log x}\bigg)<\lambda(9-\Omega(a)),
\end{align}
and thus $\mathscr{W}_a<0$ for $\Omega(a)\geqslant9$. From (\ref{W-fenjie})--(\ref{weight-upper}), we know that
\begin{align}\label{omega(a)<7-lower}
        \sum_{\substack{a\in\mathscr{A}\\ (a,P(x^{1/17.41}))=1\\ \Omega(a)\leqslant7}}\mathscr{W}_a
= &\,\, W\big(\mathscr{A},x^{1/17.41}\big)-\sum_{\substack{a\in\mathscr{A}\\ (a,P(x^{1/17.41}))=1\\ \Omega(a)=8\\
        \mu(a)\not=0}}\mathscr{W}_a-\sum_{\substack{a\in\mathscr{A}\\ (a,P(x^{1/17.41}))=1\\ \Omega(a)\geqslant9\\ \mu(a)\not=0}}\mathscr{W}_a+O\big(x^{16.41/17.41+\varepsilon}\big)
                     \nonumber \\
\geqslant &\,\, W\big(\mathscr{A},x^{1/17.41}\big)-\sum_{\substack{a\in\mathscr{A}\\ (a,P(x^{1/17.41}))=1\\
                \Omega(a)=8\\ \mu(a)\not=0}}\mathscr{W}_a+O\big(x^{16.41/17.41+\varepsilon}\big).
\end{align}
Therefore, if we can show that the contribution of the second term on the right--hand side of (\ref{omega(a)<7-lower}) is strictly less than $(1-\varepsilon)W(\mathscr{A},x^{1/17.41})$, then we shall prove Theorem \ref{Theorem-1}. For $W(\mathscr{A},x^{1/17.41})$, we have
\begin{align}\label{W(a)-chai}
         W\big(\mathscr{A},x^{1/17.41}\big)
= & \,\, \sum_{\substack{a\in\mathscr{A}\\(a,P(x^{1/17.41}))=1}}1-\lambda\sum_{x^{1/17.41}\leqslant p<x^{1/u}}
         \bigg(1-\frac{u\log p}{\log x}\bigg)\sum_{\substack{a\in\mathscr{A}\\(a,P(x^{1/17.41}))=1\\ p|a}}1
               \nonumber\\
= & \,\, S\big(\mathscr{A},x^{1/17.41}\big)-\lambda\sum_{x^{1/17.41}\leqslant p<x^{1/u}}
         \bigg(1-\frac{u\log p}{\log x}\bigg)S\big(\mathscr{A}_p,x^{1/17.41}\big).
\end{align}
Now, we shall use Lemma \ref{upper-lower-sieve} to give the lower bound of
$S\big(\mathscr{A},x^{1/17.41}\big)$. According to (\ref{A_d-asymp}), we take
\begin{equation*}
 X=\pi(x^\gamma),\qquad \omega(d)=
 \begin{cases}
   1, & \textrm{if $\mu(d)\not=0$ and $d|P(x^{1/17.41})$},\\
   0, & \textrm{otherwise}.
 \end{cases}
\end{equation*}
By (\ref{mean-value-equilavent}) and the arguments in Section \ref{ex-section}, we know that
\begin{equation*}
\sum_{d\leqslant x^\xi}\bigg|\#\mathscr{A}_d-\frac{\omega(d)}{d}\pi(x^\gamma)\bigg|\ll\frac{x^\gamma}{(\log x)^A},
\end{equation*}
where $\xi$ is defined by (\ref{level-def}). It follows from (\ref{diff-eq}) that
\begin{equation*}
  F(s)=\frac{2e^{C_0}}{s},\qquad 0<s\leqslant3;\qquad f(s)=\frac{2e^{C_0}\log(s-1)}{s}, \qquad 2\leqslant s\leqslant4,
\end{equation*}
where $C_0$ denotes Euler's constant. By noting the fact that $2<17.41\xi<4$ holds for $0.93<\gamma<1$, then
Lemma \ref{upper-lower-sieve} gives
\begin{align}
                  S\big(\mathscr{A},x^{1/17.41}\big)
\geqslant & \,\, \pi(x^\gamma)V\big(x^{1/17.41}\big)\Big(f\big(17.41\xi\big)-o(1)\Big)
                     \nonumber \\
= & \,\, 2e^{C_0}\pi(x^\gamma)V\big(x^{1/17.41}\big)\Bigg(\frac{\log\big(17.41\xi-1\big)}{17.41\xi}-o(1)\Bigg),
\end{align}
where $C_0$ denotes Euler's constant.
Moreover, it follows from (1.11) on p. 245  and (1.13) on p. 246 of Halberstam and Richert
\cite{Halberstam-Richert-book} that
\begin{align}\label{A_p-W-lower}
   & \,\, \sum_{x^{1/17.41}\leqslant p<x^{1/u}}\bigg(1-\frac{u\log p}{\log x}\bigg)
          S\big(\mathscr{A}_p,x^{1/17.41}\big)
                  \nonumber \\
\leqslant & \,\, \pi(x^\gamma)V\big(x^{1/17.41}\big)\Bigg(\sum_{x^{1/17.41}\leqslant p<x^{1/u}}
                 \bigg(1-\frac{u\log p}{\log x}\bigg)\frac{1}{p}\cdot
                 F\bigg(\frac{\log(x^\xi/p)}{\log x^{1/17.41}}\bigg)+o(1)\Bigg)
                         \nonumber \\
= & \,\, 2e^{C_0}\pi(x^\gamma)V\big(x^{1/17.41}\big)\bigg(\int_u^{17.41}\frac{t-u}{17.41t(\xi t -1)}
           \mathrm{d}t+o(1)\bigg).
\end{align}
Combining (\ref{W(a)-chai})--(\ref{A_p-W-lower}), we obtain
\begin{equation}\label{W(a,1/17.41)-lower}
   W\big(\mathscr{A},x^{1/17.41}\big)\geqslant2e^{C_0}\pi(x^\gamma)V\big(x^{1/17.41}\big)
   \Bigg(\frac{\log\big(17.41\xi-1\big)}{17.41\xi}-\lambda\int_u^{17.41}\frac{t-u}{17.41t(\xi t-1)}
           \mathrm{d}t+o(1)\Bigg).
\end{equation}
Now, we consider the second term on the right--hand side of (\ref{omega(a)<7-lower}). Set
\begin{equation*}
   \mathscr{B}=\Big\{m:m\leqslant x,\,m=p_1p_2\dots p_8,\,\, x^{1/17.41}\leqslant p_1<p_2<\dots<p_8\Big\},
\end{equation*}
and
\begin{equation*}
   \mathscr{E}=\Big\{n:\,[n^{1/\gamma}]\in\mathscr{B}\Big\}.
\end{equation*}
From (\ref{weight-upper}) we deduce that
\begin{align}\label{E-trans-upper}
\sum_{\substack{a\in\mathscr{A}\\ (a,P(x^{1/17.41}))=1\\ \Omega(a)=8\\ \mu(a)\not=0}}\mathscr{W}_a
< & \,\, \lambda \sum_{\substack{a\in\mathscr{A}\\ (a,P(x^{1/17.41}))=1\\ \Omega(a)=8\\ \mu(a)\not=0}}1
=\lambda\sum_{\substack{[p^{1/\gamma}]\leqslant x\\ [p^{1/\gamma}]=p_1p_2\dots p_8\\ x^{1/17.41}\leqslant p_1<p_2<\dots<p_8}}1=\lambda\cdot S\big(\mathscr{E},x^{\gamma/2}\big).
\end{align}
Let $\mathscr{E}_d=\big\{n\in\mathscr{E}: n\equiv0\!\pmod d\big\}$. Then one has $\#\mathscr{E}_d=\#\big\{m:[(dm)^{1/\gamma}]\in\mathscr{B}\big\}$. By setting $[(dm)^{1/\gamma}]=\ell\in\mathscr{B}$, we get
\begin{equation*}
[(dm)^{1/\gamma}]=\ell \quad\Longleftrightarrow \quad\ell\leqslant(dm)^{1/\gamma}<\ell+1 \quad
\Longleftrightarrow  \quad -\frac{(\ell+1)^\gamma}{d}<-m\leqslant-\frac{\ell^\gamma}{d},
\end{equation*}
and thus
\begin{align*}
          \#\mathscr{E}_d
 = & \,\, \sum_{m:\,[(dm)^{1/\gamma}]\in\mathscr{B}}1=\sum_{\ell\in\mathscr{B}}
          \bigg(\bigg[-\frac{\ell^\gamma}{d}\bigg]-\bigg[-\frac{(\ell+1)^\gamma}{d}\bigg]\bigg)
                       \nonumber \\
 = & \,\, \frac{1}{d}\sum_{\ell\in\mathscr{B}}\big((\ell+1)^\gamma-\ell^\gamma\big)
          +\sum_{\ell\in\mathscr{B}}\bigg(\psi\bigg(-\frac{(\ell+1)^\gamma}{d}\bigg)-
          \psi\bigg(-\frac{\ell^\gamma}{d}\bigg)\bigg)=\frac{1}{d}\mathcal{X}+\mathscr{R}_d,
\end{align*}
where
\begin{equation}\label{8-main-error-def}
  \mathcal{X}=\sum_{\ell\in\mathscr{B}}\big((\ell+1)^\gamma-\ell^\gamma\big),\qquad
  \mathscr{R}_d=\sum_{\ell\in\mathscr{B}}\bigg(\psi\bigg(-\frac{(\ell+1)^\gamma}{d}\bigg)-
                \psi\bigg(-\frac{\ell^\gamma}{d}\bigg)\bigg).
\end{equation}
In order to use Lemma \ref{upper-lower-sieve} to give upper bound for $S(\mathscr{E},x^{\gamma/2})$, we need to establish the following lemma.
\begin{lemma}\label{omega=8-error}
Let $\mathscr{R}_d$ be defined as in (\ref{8-main-error-def}). Then, for any $A>0$, we have
\begin{equation}
  \sum_{d\leqslant x^\xi}\big|\mathscr{R}_d\big|\ll \frac{x^\gamma}{(\log x)^{A}}.
\end{equation}
\end{lemma}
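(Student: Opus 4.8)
The plan is to reduce the estimate of $\sum_{d\leqslant x^\xi}|\mathscr{R}_d|$ to an exponential–sum estimate of essentially the same shape as the one already carried out for the original set $\mathscr{A}$ in Section \ref{ex-section}. First I would observe that $\mathscr{B}\subseteq(x^{1-\eta'},x]$ for a suitable small $\eta'$ — indeed every $m=p_1\cdots p_8\in\mathscr{B}$ with $m\leqslant x$ has all prime factors $\geqslant x^{1/17.41}$, so $m\geqslant x^{8/17.41}$, but more importantly the sieve weight $S(\mathscr{E},x^{\gamma/2})$ only needs $\mathscr{R}_d$ for $\ell$ ranging over the full dyadic-block decomposition of $\mathscr{B}\cap(1,x]$; after a dyadic splitting $\ell\sim Y$ with $Y\leqslant x$ and insertion of the indicator $\mathbf{1}_{\mathscr{B}}(\ell)$, which by the standard Buchstab/$\Omega=8$ combinatorial identity (or simply $\tau_8(\ell)$-type bounds, which lose only $x^\eta$) is a finitely-decomposed sum of products of eight variables each $\geqslant x^{1/17.41}$, I would write $\mathbf{1}_{\mathscr{B}}$ in a Heath–Brown–type bilinear decomposition into Type I and Type II pieces exactly as in \eqref{expo-fenjie}. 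The key point is that the "large" variable here can always be taken $\geqslant x^{1/17.41}$ and no variable is forced below $x^{1/17.41}$, so the admissible ranges for $M$ in the Type I / Type II split are at least as good as those handled by Lemmas \ref{Type-I-es} and \ref{Type-II-es}.

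Next I would apply Lemma \ref{psi-expansion} with $H=H_2:=x^{\xi/\gamma+\eta}$ (note $d\leqslant x^\xi$ and $\ell^\gamma\asymp$ a power of $x$, so this is the right truncation level, matching $D=x^\xi$), writing
\begin{equation*}
\mathscr{R}_d=-\sum_{0<|h|\leqslant H_2}\frac{1}{2\pi i h}\sum_{\ell\in\mathscr{B}}\Big(e\big(-h(\ell+1)^\gamma/d\big)-e\big(-h\ell^\gamma/d\big)\Big)+O\big(\text{error term}\big),
\end{equation*}
then handle the contribution of the $g(\cdot,H_2)$ error by Lemma \ref{expo-pair-gernal} with $(\kappa,\ell)=(\tfrac12,\tfrac12)$ exactly as the terms $E_1,E_2$ were handled, which costs only the conditions \eqref{suffi-condi-2} that already hold for the chosen $\xi$ in \eqref{level-def}. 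Using $|1-e(-h/d)|$ is not available here since the two arguments differ by $((\ell+1)^\gamma-\ell^\gamma)/d\asymp \ell^{\gamma-1}/d$ rather than $1/d$; instead I would extract the oscillation via $e(-h(\ell+1)^\gamma/d)-e(-h\ell^\gamma/d)=e(-h\ell^\gamma/d)\big(e(-h\theta_\ell/d)-1\big)$ with $\theta_\ell=(\ell+1)^\gamma-\ell^\gamma$, and either absorb the smooth factor $e(-h\theta_\ell/d)-1\ll |h|\ell^{\gamma-1}/d$ by partial summation after a further dyadic split in $\ell$, or (cleaner) treat $e(-h\ell^\gamma/d)$ directly since its derivative in $\ell$ is $\asymp h\ell^{\gamma-1}/d$, which is exactly the frequency appearing throughout Section \ref{ex-section} after the substitution $n^{1/\gamma}\leftrightarrow \ell$, $1/\gamma\leftrightarrow\gamma$ (the roles of exponent and its reciprocal swap, but Lemmas \ref{latticepoints} and \ref{Robert-Sargos-lemma} are stated flexibly enough — the exponential-pair lemma is exponent-agnostic, and Lemma \ref{latticepoints} already has $\gamma$ and $1/\gamma$ both present).

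The Type I sums then reduce, after partial summation in the arithmetically-weighted short variable, to bounding $\sum_{h\sim H_2}\sum_{m\sim M}\big|\sum_{\ell\sim L}e(h(m\ell)^\gamma/d)\big|$ for $d\sim D$, which is precisely the form controlled by Lemma \ref{Robert-Sargos-lemma}; the Type II sums reduce, after Cauchy–Schwarz and the $T$-splitting device, to counting solutions of $|h_1\ell_1^\gamma/d_1-h_2\ell_2^\gamma/d_2|<\Delta$, controlled by Lemma \ref{latticepoints}. In both cases the final arithmetic conditions are those already verified for $\xi=\xi(\gamma)=(140\gamma-99)/270-\eta$ with $0.989<\gamma<1$, so the estimate $\sum_{d\leqslant x^\xi}|\mathscr{R}_d|\ll x^\gamma\mathscr{L}^{-A}$ follows, giving the lemma. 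The main obstacle — and the one place where genuine care is needed rather than transcription — is confirming that decomposing $\mathbf{1}_{\mathscr{B}}$ (an eight-fold restricted product of primes, all $\geqslant x^{1/17.41}$) via Heath–Brown's identity never produces a Type I term with leading variable forced outside the range $M\ll X^{2-(3\xi+1)/\gamma-\eta}$ of Lemma \ref{Type-I-es} nor a Type II configuration outside the window of Lemma \ref{Type-II-es}; since every prime factor is $\geqslant x^{1/17.41}$ one has a great deal of freedom in grouping the factors, and a short case analysis on the sizes of the eight prime variables (using that their product is $\asymp Y\leqslant x$) shows one can always assemble a bilinear split meeting Lemma \ref{exponen-fenjie}'s hypotheses, so this obstacle is surmountable but deserves an explicit paragraph.
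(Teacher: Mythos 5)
Your overall strategy (dyadic splitting, the $\psi$--expansion of Lemma \ref{psi-expansion}, partial summation to strip the smooth factor $e(-h((\ell+1)^\gamma-\ell^\gamma)/d)-1\ll h d^{-1}\ell^{\gamma-1}$, and then a bilinear estimate exploiting that every element of $\mathscr{B}$ factors into eight primes $\geqslant x^{1/17.41}$) is the same as the paper's, but two of your concrete claims contain genuine gaps. First, the truncation: you take $H=H_2=x^{\xi/\gamma+\eta}$, but here $\ell$ runs up to $x$ (not up to $x^{\gamma}$ as in Section 3) while the target is still $x^{\gamma}\mathscr{L}^{-A}$, so the zero-frequency part of the error $g(\cdot,H)$ contributes about $\mathscr{L}XDH^{-1}$, which for $X\asymp x$ forces $H\gg x^{1-\gamma+\xi}\mathscr{L}^{A+1}$; since $\xi/\gamma-\xi=\xi(1-\gamma)/\gamma<1-\gamma$, your choice gives an error of size roughly $x^{1-\xi(1-\gamma)/\gamma-\eta}$, which exceeds $x^{\gamma}$ for $\gamma$ near $1$. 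The paper takes $H=X^{1-\gamma+\xi+\eta}$. This is fixable, but it enlarges the harmonic range $J\ll H$ entering the bilinear estimate, which leads to the more serious issue.

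Second, your assertion that after the swap $1/\gamma\leftrightarrow\gamma$ "the final arithmetic conditions are those already verified for $\xi$" is not justified, and the lemmas you invoke do not apply as stated: Lemma \ref{exponen-fenjie} concerns decompositions of $\Lambda$, not of $\mathds{1}_{\mathscr{B}}$ (which needs no identity at all, since it factors by definition), and the numerology of Lemmas \ref{Type-I-es} and \ref{Type-II-es} is tied to the phase $h(m\ell)^{1/\gamma}/d$ with $X\leqslant x^{\gamma}$, $D\asymp X^{\xi/\gamma}$ and $H_2=X^{\xi/\gamma+\eta}$. With the phase $h\ell^{\gamma}/d$, $X\leqslant x$, $D=x^{\xi}$ and the larger $H$, the Cauchy--Schwarz/$T$-splitting computation must be redone; the paper does exactly this (exponent pair $(\frac12,\frac12)$ together with Lemma \ref{latticepoints} with the roles of $\gamma$ and $1/\gamma$ interchanged) and obtains a different admissible window, $X^{5-5\gamma+4\xi+\eta}\ll M\ll X^{(\gamma+\xi+2)/4-\eta}$, plus the extra constraint $\gamma>(5\xi+6)/7$, i.e. $\gamma>225/238$, none of which appears in Section 4. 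Moreover, no Type I estimate is needed or used: once the window is computed numerically ($\alpha_0\approx 0.63959$, $\beta_0\approx 0.78379$ for $0.989<\gamma<1$), the decisive step is the combinatorial argument you defer to "an explicit paragraph" --- showing that every $\ell=p_1\cdots p_8\in\mathscr{B}$ with $\ell\sim X>x^{1-\eta}$ has a partial product in $[X^{\alpha_0},X^{\beta_0}]$ --- and this case analysis (about a page in the paper) cannot even be set up before the new bilinear numerology is known, since whether such a covering by partial products exists depends on the specific values of $\alpha_0,\beta_0$. As written, your proposal asserts rather than establishes both the new Type II window and the partial-product covering argument, so it is incomplete precisely at the points where the paper's proof does its real work.
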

\begin{proof}
From the definition of $\mathscr{R}_d$ and splitting argument, it is sufficient to show that, for $X\leqslant x$,
there holds
\begin{equation}\label{error-splitting-1}
\sum_{d\sim D}\Bigg|\sum_{\substack{\ell\in\mathscr{B}\\ \ell\sim X}}
\bigg(\psi\bigg(-\frac{(\ell+1)^\gamma}{d}\bigg)-\psi\bigg(-\frac{\ell^\gamma}{d}\bigg)\bigg)\Bigg|
\ll\frac{x^\gamma}{(\log x)^A},
\end{equation}
where $D=x^\xi$. Let $\mathds{1}_{\mathscr{B}}(\cdot)$ be the characteristic function supporting on $\mathscr{B}$. Then (\ref{error-splitting-1}) is equivalent to
\begin{equation}\label{error-splitting-2}
\sum_{d\sim D}\Bigg|\sum_{ \ell\sim X}\mathds{1}_{\mathscr{B}}(\ell)
\bigg(\psi\bigg(-\frac{(\ell+1)^\gamma}{d}\bigg)-\psi\bigg(-\frac{\ell^\gamma}{d}\bigg)\bigg)\Bigg|
\ll\frac{x^\gamma}{(\log x)^A}.
\end{equation}
If $X\leqslant x^{(1-\eta)}$, then the left--hand side of (\ref{error-splitting-2}) is
\begin{align}\label{8-error-f-1}
 = & \,\, \sum_{d\sim D}\Bigg|\sum_{\ell\sim X}\mathds{1}_{\mathscr{B}}(\ell)\bigg(
          \frac{\ell^\gamma-(\ell+1)^\gamma}{d}+\bigg[-\frac{\ell^\gamma}{d}\bigg]-
          \bigg[-\frac{(\ell+1)^\gamma}{d}\bigg]\bigg)\Bigg|
                 \nonumber \\
 \leqslant & \,\, \bigg(\sum_{d\sim D}\frac{1}{d}\bigg)\bigg(\sum_{\ell\sim X}
                  \big((\ell+1)^\gamma-\ell^\gamma\big)\bigg)+\sum_{d\sim D}\Bigg|\sum_{\ell\sim X}
                  \mathds{1}_{\mathscr{B}}(\ell)\bigg(\bigg[-\frac{\ell^\gamma}{d}\bigg]-
                  \bigg[-\frac{(\ell+1)^\gamma}{d}\bigg]\bigg)\Bigg|
                  \nonumber \\
 \leqslant & \,\, \bigg(\sum_{d\sim D}\frac{1}{d}\bigg)\bigg(\sum_{\ell\sim X}
                  \big((\ell+1)^\gamma-\ell^\gamma\big)\bigg)+\sum_{d\sim D}\sum_{\ell\sim X}
                  \bigg(\bigg[-\frac{\ell^\gamma}{d}\bigg]-\bigg[-\frac{(\ell+1)^\gamma}{d}\bigg]\bigg)
                  \nonumber \\
 = & \,\, 2\bigg(\sum_{d\sim D}\frac{1}{d}\bigg)\bigg(\sum_{\ell\sim X}
                  \big((\ell+1)^\gamma-\ell^\gamma\big)\bigg)+\sum_{d\sim D}\sum_{\ell\sim X}
                  \bigg(\psi\bigg(-\frac{(\ell+1)^\gamma}{d}\bigg)-\psi\bigg(-\frac{\ell^\gamma}{d}\bigg)\bigg)
                   \nonumber \\
 \ll & \,\, X^\gamma+\sum_{d\sim D}\sum_{\ell\sim X}
                  \bigg(\psi\bigg(-\frac{(\ell+1)^\gamma}{d}\bigg)-\psi\bigg(-\frac{\ell^\gamma}{d}\bigg)\bigg)
                  \nonumber \\
 \ll & \,\, x^{\gamma-\eta}+\sum_{d\sim D}\sum_{\ell\sim X}
                  \bigg(\psi\bigg(-\frac{(\ell+1)^\gamma}{d}\bigg)-\psi\bigg(-\frac{\ell^\gamma}{d}\bigg)\bigg).
\end{align}
For the sum on the right--hand side of (\ref{8-error-f-1}), by Lemma \ref{psi-expansion} with $H=H_*:=x^{1-\gamma+\xi+\eta}$ in (\ref{psi-expan}), we have
\begin{align*}
  & \,\, \sum_{d\sim D}\sum_{\ell\sim X}
         \bigg(\psi\bigg(-\frac{(\ell+1)^\gamma}{d}\bigg)-\psi\bigg(-\frac{\ell^\gamma}{d}\bigg)\bigg)
               \nonumber \\
= & \,\, \sum_{d\sim D}\sum_{\ell\sim X}\Bigg(\sum_{0<|h|\leqslant H_*}\frac{1}{2\pi ih}
         \bigg(e\bigg(-\frac{h\ell^\gamma}{d}\bigg)-e\bigg(-\frac{h(\ell+1)^\gamma}{d}\bigg)\bigg)
               \nonumber \\
  & \,\, +O\bigg(g\bigg(\frac{\ell^\gamma}{d},H_*\bigg)\bigg)
         +O\bigg(g\bigg(\frac{(\ell+1)^\gamma}{d},H_*\bigg)\bigg)\Bigg)=\Theta_1+\Theta_2+\Theta_3,
\end{align*}
say. We only need to deal with $\Theta_2$, since $\Theta_3$ can be treated exactly the same. By Lemma \ref{expo-pair-gernal} with $(\kappa,\ell)=(\frac{1}{2},\frac{1}{2})$, we have
\begin{align}\label{theta-2-deal}
            \Theta_2
 \ll & \,\, \sum_{d\sim D}\sum_{h=-\infty}^\infty|b_h|
            \Bigg|\sum_{\ell\sim X}e\bigg(\frac{h\ell^{\gamma}}{d}\bigg)\Bigg|
                       \nonumber \\
 \ll & \,\, \mathscr{L}XDH_*^{-1}+\sum_{d\sim D}\sum_{\substack{h=-\infty\\ h\not=0}}^\infty
            |b_h|\Bigg|\sum_{\ell\sim X}e\bigg(\frac{h\ell^\gamma}{d}\bigg)\Bigg|
                       \nonumber \\
 \ll & \,\, \mathscr{L}XDH_*^{-1}+\sum_{d\sim D}\sum_{\substack{h=-\infty\\ h\not=0}}^\infty
            |b_h|\bigg(\frac{d}{|h|}X^{1-\gamma}+\bigg(\frac{|h|}{d}\bigg)^{1/2}X^{\gamma/2}\bigg)
                       \nonumber \\
 \ll & \,\, \mathscr{L}XDH_*^{-1}+\sum_{d\sim D}\Bigg(\sum_{0<|h|\leqslant H_*}\frac{1}{|h|}
            \bigg(\frac{d}{|h|}X^{1-\gamma}+\bigg(\frac{|h|}{d}\bigg)^{1/2}X^{\gamma/2}\bigg)
                       \nonumber \\
     & \,\, +\sum_{|h|>H_*}\frac{H_*}{h^2}
            \bigg(\frac{d}{|h|}X^{1-\gamma}+\bigg(\frac{|h|}{d}\bigg)^{1/2}X^{\gamma/2}\bigg)\Bigg)
                       \nonumber \\
 \ll & \,\, \mathscr{L}XDH_*^{-1}+D^2X^{1-\gamma}+D^{1/2}H_*^{1/2}X^{\gamma/2}\ll x^{\gamma}\mathscr{L}^{-A},
\end{align}
provided that
\begin{equation*}
\gamma>\frac{1}{2}+\xi.
\end{equation*}
For $\Theta_1$, by partial summation and Lemma \ref{expo-pair-gernal} with
$(\kappa,\ell)=(\frac{1}{2},\frac{1}{2})$, we get
\begin{align*}
            \Theta_1
\ll & \,\, X^{\gamma-1}\sum_{d\sim D}\frac{1}{d}\sum_{0<|h|\leqslant H_*}
           \Bigg|\sum_{\ell\sim X}e\bigg(\frac{h\ell^{\gamma}}{d}\bigg)\Bigg|
                 \nonumber \\
\ll & \,\, X^{\gamma-1}\sum_{d\sim D}\frac{1}{d}\sum_{0<|h|\leqslant H_*}
           \bigg(\frac{d}{|h|}X^{1-\gamma}+\bigg(\frac{|h|}{d}\bigg)^{1/2}X^{\gamma/2}\bigg)
                 \nonumber \\
\ll & \,\, D\mathscr{L}+X^{1/2+3\xi/2+\eta}\ll x^{\gamma-\eta},
\end{align*}
provided that
\begin{equation*}
\gamma>\frac{1}{2}+\frac{3}{2}\xi.
\end{equation*}
Now, we assume that $x^{1-\eta}<X\leqslant x$, by (\ref{psi-expan}) we know that the total contribution of the error term in (\ref{psi-expan}) to the left--hand side of (\ref{error-splitting-2}) is
\begin{equation*}
\ll \sum_{d\sim D}\sum_{\ell\sim X}\bigg(g\bigg(\frac{(\ell+1)^\gamma}{d},H\bigg)+
g\bigg(\frac{\ell^\gamma}{d},H\bigg)\bigg):=\mathfrak{S}_1^*+\mathfrak{S}_2^*.
\end{equation*}
One can show that $\mathfrak{S}_1^*,\mathfrak{S}_2^*\ll x^\gamma\mathscr{L}^{-A}$ by following the same processes as those in (\ref{theta-2-deal}) under the conditions
\begin{equation*}
 H=X^{1-\gamma+\xi+\eta},\qquad \gamma>\frac{1}{2}+\xi.
\end{equation*}
The contribution of the main term in (\ref{psi-expan}) to the left--hand side of (\ref{error-splitting-2}) is
\begin{align*}
= & \,\,\sum_{d\sim D}\Bigg|\sum_{\ell\sim X}\mathds{1}_{\mathscr{B}}(\ell)\sum_{0<|h|\leqslant H}\frac{1}{2\pi ih}
    \bigg(e\bigg(-\frac{h\ell^\gamma}{d}\bigg)-e\bigg(-\frac{h(\ell+1)^\gamma}{d}\bigg)\bigg)\Bigg|
               \nonumber \\
\ll & \,\,\sum_{d\sim D}\sum_{0<h\leqslant H}\frac{1}{h}\Bigg|\sum_{\ell\sim X}\mathds{1}_{\mathscr{B}}(\ell)
      \bigg(e\bigg(-\frac{h\ell^\gamma}{d}\bigg)-e\bigg(-\frac{h(\ell+1)^\gamma}{d}\bigg)\bigg)\Bigg|
      =:\mathfrak{S},
\end{align*}
say. Consequently, it suffices to show that $\mathfrak{S}\ll x^\gamma\mathscr{L}^{-A}$. Define
\begin{equation*}
\mathfrak{f}_{h,d}(\ell)=1-e\bigg(\frac{h(\ell^\gamma-(\ell+1)^\gamma)}{d}\bigg).
\end{equation*}
It follows from partial summation that
\begin{align*}
          \mathfrak{S}
 = & \,\, \sum_{d\sim D}\sum_{0<h\leqslant H}\frac{1}{h}\Bigg|\sum_{\ell\sim X}\mathds{1}_{\mathscr{B}}(\ell)
          e\bigg(-\frac{h\ell^\gamma}{d}\bigg)\mathfrak{f}_{h,d}(\ell) \Bigg|
                \nonumber \\
 = & \,\, \sum_{d\sim D}\sum_{0<h\leqslant H}\frac{1}{h}\Bigg|\int_{\frac{X}{2}}^X\mathfrak{f}_{h,d}(u)
          \mathrm{d}\bigg(\sum_{X/2<\ell\leqslant u}\mathds{1}_{\mathscr{B}}(\ell)
          e\bigg(-\frac{h\ell^\gamma}{d}\bigg)\bigg)\Bigg|
                \nonumber \\
 \ll & \,\, \sum_{d\sim D}\sum_{0<h\leqslant H}\frac{1}{h}\Bigg(\big|\mathfrak{f}_{h,d}(X)\big|\Bigg|
            \sum_{\ell\sim X}\mathds{1}_{\mathscr{B}}(\ell)e\bigg(-\frac{h\ell^\gamma}{d}\bigg)\Bigg|
                \nonumber \\
     & \,\, +\int_{\frac{X}{2}}^X\Bigg|\sum_{X/2<\ell\leqslant u}
            \mathds{1}_{\mathscr{B}}(\ell)e\bigg(-\frac{h\ell^\gamma}{d}\bigg)\Bigg|
            \bigg|\frac{\partial\mathfrak{f}_{h,d}(u)}{\partial u}\bigg|\mathrm{d}u\Bigg)
                \nonumber \\
 \ll & \,\, X^{\gamma-1}\sum_{d\sim D}\frac{1}{d}\sum_{0<h\leqslant H}\max_{X/2<u\leqslant X}
            \Bigg|\sum_{X/2<\ell\leqslant u}
            \mathds{1}_{\mathscr{B}}(\ell)e\bigg(-\frac{h\ell^\gamma}{d}\bigg)\Bigg|,
\end{align*}
where we use the estimate
\begin{equation*}
  \big|\mathfrak{f}_{h,d}(u)\big|\ll hd^{-1}u^{\gamma-1}\qquad \textrm{and}\qquad\,\,
  \bigg|\frac{\partial \mathfrak{f}_{h,d}(u)}{\partial u}\bigg|\ll hd^{-1}u^{\gamma-2}.
\end{equation*}
where we use the estimate
\begin{equation*}
  \big|\mathfrak{f}_h(u)\big|\ll hu^{\gamma-1}\qquad \textrm{and}\qquad\,\,
  \bigg|\frac{\partial \mathfrak{f}_h(u)}{\partial u}\bigg|\ll hu^{\gamma-2}.
\end{equation*}
There, we obtain
\begin{align}\label{error-splitting-3}
   & \,\,  \sum_{d\sim D}\Bigg|\sum_{ \ell\sim X}\mathds{1}_{\mathscr{B}}(\ell)
           \bigg(\psi\bigg(-\frac{(\ell+1)^\gamma}{d}\bigg)-\psi\bigg(-\frac{\ell^\gamma}{d}\bigg)\bigg)\Bigg|
                 \nonumber \\
 \ll & \,\, x^\gamma\mathscr{L}^{-A}+\max_{\substack{X/2<u\leqslant X\\ x^{1-\eta}<X\leqslant x}}X^{\gamma-1}
             \sum_{d\sim D}\frac{1}{d}\sum_{0<h\leqslant H}\Bigg|\sum_{X/2<\ell\leqslant u}
            \mathds{1}_{\mathscr{B}}(\ell)e\bigg(-\frac{h\ell^\gamma}{d}\bigg)\Bigg|
                      \nonumber \\
 \ll & \,\, x^\gamma\mathscr{L}^{-A}+\max_{\substack{X/2<u\leqslant X\\ x^{1-\eta}<X\leqslant x}}X^{\gamma-1}
             \sum_{d\sim D}\frac{1}{d}\sum_{0<h\leqslant H}\delta^*(d,h)\sum_{X/2<\ell\leqslant u}
            \mathds{1}_{\mathscr{B}}(\ell)e\bigg(-\frac{h\ell^\gamma}{d}\bigg)
                      \nonumber \\
 \ll & \,\, x^\gamma\mathscr{L}^{-A}+X^{\gamma-1}\max_{\substack{X/2<u\leqslant X\\ x^{1-\eta}<X\leqslant x}}
            \Bigg|\sum_{\substack{\ell\in\mathscr{B}\\ X/2<\ell\leqslant u}}\sum_{0<h\leqslant H}\Xi(h)
            e\bigg(-\frac{h\ell^\gamma}{d}\bigg)\Bigg|,
\end{align}
where
\begin{equation*}
  \Xi(h)=\sum_{d\sim D}\frac{\delta^*(d,h)}{d},\qquad \quad \big|\delta^*(d,h)\big|=1.
\end{equation*}
Denote by $\mathfrak{S}_0$ the triple summation in the absolute value on the right--hand side in (\ref{error-splitting-3}). In order to prove (\ref{error-splitting-2}), it is sufficient to show that
$\mathfrak{S}_0\ll X^{1-\eta}$. By dividing $\ell$ into two variables and following the same processes, except for replacing $1/\gamma$ with $\gamma$ and replacing the exponent pair $(\frac{11}{30},\frac{8}{15})$ with $(\frac{1}{2},\frac{1}{2})$, as demonstrated in Type II sum estimate, we deduce that (using the same notation as in (\ref{Sigma-2-fi-1}))
\begin{align}\label{S_0-upper-1}
            |\mathfrak{S}_0|^2
 \ll & \,\, X^{\eta}\big(X^{2-\gamma}TJD+XMTJD^{-1}+X^{\gamma/2+2}M^{-1}J^{5/2}D^{-1/2}T^{-1/2}
                    \nonumber \\
     & \,\, \qquad+X^{\gamma/2+1}J^{3/2}D^{-3/2}T^{1/2}\big).
\end{align}
Now, we choose
\begin{equation}\label{T-chosen-S_0}
 T=\big[X^{(\gamma+2)/3}M^{-4/3}JD^{1/3}\big].
\end{equation}
Trivially, one has $T>1$ provided that $M\ll X^{(\gamma+\xi+2)/4-\eta}$. Inserting (\ref{T-chosen-S_0}) into (\ref{S_0-upper-1}),  we obtain
\begin{align}\label{S_0-upper-2}
            |\mathfrak{S}_0|^2
 \ll & \,\, X^{\eta}\big(M^{-4/3}X^{(8-2\gamma)/3}J^2D^{4/3}+M^{-1/3}X^{(\gamma+5)/3}J^2D^{-2/3}
                     \nonumber \\
     & \,\, \qquad +M^{-2/3}X^{(2\gamma+4)/3}J^2D^{-4/3}\big),
\end{align}
which combined with $J\ll H=X^{1-\gamma+\xi+\eta}$ and $X^{\xi}\leqslant D\leqslant X^{\xi+\eta/2}$ yields
\begin{align*}
            |\mathfrak{S}_0|^2
 \ll & \,\, X^{\eta}\big(M^{-4/3}X^{(14-8\gamma+10\xi)/3}+M^{-1/3}X^{(11-5\gamma+4\xi)/3}
            +M^{-2/3}X^{(10-4\gamma+2\xi)/3}\big).
\end{align*}
Accordingly, one derives that $\mathfrak{S}_0\ll X^{1-\eta}$, provided that
\begin{equation}\label{S_0-upp-condition}
X^{5-5\gamma+4\xi+\eta}\ll M\ll X^{(\gamma+\xi+2)/4-\eta},
\end{equation}
and $\gamma>(5\xi+6)/7$. By (\ref{level-def}), we get $\mathfrak{S}_0\ll X^{1-\eta}$ under the condition (\ref{S_0-upp-condition}) and $\gamma>225/238=0.94537815\dots$. Especially, for $0.989<\gamma<1$ and the definition of $\xi$, i.e., (\ref{level-def}), it is easy to see that
\begin{equation*}
X^{5-5\gamma+4\xi+\eta}\leqslant X^{\alpha_0}<X^{\beta_0}\leqslant X^{(\gamma+\xi+2)/4-\eta},
\end{equation*}
where
\begin{equation*}
\alpha_0=0.6395925926,\qquad \beta_0=0.78378703702.
\end{equation*}
Next, we shall illustrate that, for $\ell=p_1p_2\dots p_8\in\mathscr{B}$ with $\ell\sim X>x^{1-\eta}$, there must be some partial product of $p_1p_2\dots p_8$ which lies in the interval $[X^{\alpha_0},X^{\beta_0}]$.

First, since $p_i\geqslant x^{1/17.41}$ and $p_1p_2\dots p_8\in[x^{1-\eta},x]$, we have
$p_i\leqslant X^{\beta_0}\,(i=1,2,\dots,8)$. Otherwise, if there exists some $p_i>X^{\beta_0}$, then
$\ell\geqslant X^{\beta_0}\cdot x^{7/17.41}>x^{0.783+7/17.41}>x^{1.18}$, which is a contradiction. If there exists
some $p_i\in[X^{\alpha_0},X^{\beta_0}]$, then the conclusion follows. If this case does not exist, we consider the
product $p_5p_6p_7p_8$. We claim that there must be $p_5p_6p_7p_8\leqslant X^{\beta_0}$. Otherwise, we get
\begin{equation*}
X^{\beta_0}<p_5p_6p_7p_8<X(p_1p_2p_3p_4)^{-1}<x^{1-4/17.41}<x^{0.771}<X^{\beta_0},
\end{equation*}
which is a contradiction. If $p_5p_6p_7p_8\in[X^{\alpha_0},X^{\beta_0}]$, then the conclusion follows. If this case does not exist, i.e., $p_5p_6p_7p_8<X^{\alpha_0}$, we consider the product $p_3p_4p_5p_6p_7p_8$. Now, we claim that there must hold $p_3p_4p_5p_6p_7p_8\geqslant X^{\alpha_0}$. Otherwise, we obtain
\begin{equation*}
X\ll (p_1p_2)(p_3p_4p_5p_6p_7p_8)<(p_3p_4p_5p_6p_7p_8)^{4/3}<X^{4\alpha_0/3}<X^{0.853},
\end{equation*}
which is a contradiction. If $p_3p_4p_5p_6p_7p_8\leqslant X^{\beta_0}$, then the conclusion follows. If this case does not exist, i.e., $p_3p_4p_5p_6p_7p_8> X^{\beta_0}$, we have (under the condition $p_5p_6p_7p_8<X^{\alpha_0}$)
\begin{equation}\label{1234-condition}
X^{1-\alpha_0-\eta}<(X/2)(p_5p_6p_7p_8)^{-1}<p_1p_2p_3p_4<(p_1p_2p_3p_4p_5p_6p_7p_8)^{1/2}<X^{1/2},
\end{equation}
which implies that $p_3p_4>X^{(1-\alpha_0)/2-\eta}$. By noting that
\begin{equation*}
p_3p_4<p_5p_6<(p_5p_6p_7p_8)^{1/2}<X^{\alpha_0/2},
\end{equation*}
we get $X^{(1-\alpha_0)/2-\eta}<p_3p_4<X^{\alpha_0/2}$. If $X^{1-\beta_0}\leqslant p_3p_4<X^{\alpha_0/2}$, then
we have
\begin{equation*}
X^{\alpha_0}<X^{1-\alpha_0/2-\eta}<p_1p_2p_5p_6p_7p_8\leqslant X^{\beta_0}.
\end{equation*}
If $X^{(1-\alpha_0)/2-\eta}<p_3p_4<X^{1-\beta_0}$, we have $p_3<(p_3p_4)^{1/2}<X^{(1-\beta_0)/2}$, and thus
\begin{equation*}
p_4p_5p_6p_7p_8>X^{\beta_0}p_3^{-1}>X^{\beta_0-(1-\beta_0)/2}>X^{\alpha_0}.
\end{equation*}
Moreover, there must hold $p_4p_5p_6p_7p_8\leqslant X^{\beta_0}$. Otherwise, if $p_4p_5p_6p_7p_8> X^{\beta_0}$, then
\begin{equation}\label{123-condition}
p_1p_2p_3<X(p_4p_5p_6p_7p_8)^{-1}<X^{1-\beta_0}.
\end{equation}
From (\ref{1234-condition}) and (\ref{123-condition}), we deduce that
\begin{equation*}
X^{\beta_0-\alpha_0-\eta}<X^{1-\alpha_0-\eta}(p_3p_4)^{-1}<p_1p_2<(p_1p_2p_3)^{2/3}<X^{2(1-\beta_0)/3},
\end{equation*}
which is contradict to the fact that
\begin{equation*}
\beta_0-\alpha_0-\eta>\frac{2}{3}(1-\beta_0).
\end{equation*}
This completes the proof of Lemma \ref{omega=8-error}.
\end{proof}

From Lemma \ref{upper-lower-sieve} and Lemma \ref{omega=8-error}, we deduce that
\begin{equation}\label{S(E)-upper}
S\big(\mathscr{E},x^{\gamma/2}\big)\leqslant \mathcal{X}V\big(x^{\gamma/2}\big)\bigg(F\bigg(\frac{2\xi}{\gamma}\bigg)+o(1)\bigg).
\end{equation}
According to (\ref{V(z)-def}), we get
\begin{equation*}
V\big(x^{\gamma/2}\big)=\frac{2}{17.41\gamma}V\big(x^{1/17.41}\big)\big(1+O(\log x)^{-1}\big),
\end{equation*}
which combined with (\ref{S(E)-upper}) yields
\begin{equation}\label{S(E)-upper-1}
S(\mathscr{E},x^{\gamma/2})\leqslant \frac{2e^{C_0}}{17.41\xi}\mathcal{X}V\big(x^{1/17.41}\big)(1+o(1)).
\end{equation}
Next, we compute the quantity $\mathcal{X}$ definitely. Obviously, we have
\begin{equation}\label{X-expan}
\mathcal{X}=\sum_{\ell\in\mathscr{B}}\gamma\ell^{\gamma-1}+
\sum_{\ell\in\mathscr{B}}\big((\ell+1)^\gamma-\ell^\gamma-\gamma\ell^{\gamma-1}\big).
\end{equation}
For the second term in (\ref{X-expan}), we have
\begin{align}\label{X-expan-nd-upper}
  & \,\,   \sum_{\ell\in\mathscr{B}}\big((\ell+1)^\gamma-\ell^\gamma-\gamma\ell^{\gamma-1}\big)
        \ll \sum_{\ell\in\mathscr{B}}\ell^{\gamma-2}
              \nonumber \\
 \ll& \,\, \Bigg(\sum_{x^{1/17.41}\leqslant p\leqslant x}p^{\gamma-2}\Bigg)^8
 \ll  \Bigg(\sum_{x^{1/17.41}\leqslant m\leqslant x}m^{\gamma-2}\Bigg)^8\ll x^{8(\gamma-1)/17.41}=o(1).
\end{align}
For the first term in (\ref{X-expan}), we have
\begin{align}\label{X-first-num}
   & \,\, \sum_{\ell\in\mathscr{B}}\gamma\ell^{\gamma-1}
                \nonumber \\
 = & \,\, \gamma\sum_{x^{1/17.41}\leqslant p_1< x^{1/8}}\sum_{p_1<p_2<(x/p_1)^{1/7}}
          \sum_{p_2<p_3<(x/(p_1p_2))^{1/6}}\sum_{p_3<p_4<x/(p_1p_2p_3)^{1/5}}
          \sum_{p_4<p_5<x/(p_1p_2p_3p_4)^{1/4}}
                \nonumber \\
    & \,\,\times\sum_{p_5<p_6<x/(p_1p_2p_3p_4p_5)^{1/3}}
          \sum_{p_6<p_7<x/(p_1p_2p_3p_4p_5p_6)^{1/2}}
          \sum_{p_7<p_8<x/(p_1p_2p_3p_4p_5p_6p_7)}
          (p_1p_2p_3p_4p_5p_6p_7p_8)^{\gamma-1}
                \nonumber \\
 = & \,\, \gamma\big(1+o(1)\big)\int_{x^{1/17.41}}^{x^{1/8}}\int_{u_1}^{(\frac{x}{u_1})^{1/7}}
          \int_{u_2}^{(\frac{x}{u_1u_2})^{1/6}}\int_{u_3}^{(\frac{x}{u_1u_2u_3})^{1/5}}
          \int_{u_4}^{(\frac{x}{u_1u_2u_3u_4})^{1/4}}\int_{u_5}^{(\frac{x}{u_1u_2u_3u_4u_5})^{1/3}}
                \nonumber \\
   & \,\, \times\int_{u_6}^{(\frac{x}{u_1u_2u_3u_4u_5u_6})^{1/2}}
          \int_{u_7}^{\frac{x}{u_1u_2u_3u_4u_5u_6u_7}}
          \frac{(u_1u_2\dots u_8)^{\gamma-1}\mathrm{d}u_8\mathrm{d}u_7\dots\mathrm{d}u_2\mathrm{d}u_1}
          {(\log u_1)(\log u_2)\dots(\log u_8)}
                 \nonumber \\
 = & \,\, \gamma\big(1+o(1)\big)\int_{\frac{1}{17.41}}^{\frac{1}{8}}\frac{\mathrm{d}t_1}{t_1}
          \int_{t_1}^{\frac{1-t_1}{7}}\frac{\mathrm{d}t_2}{t_2}
          \int_{t_2}^{\frac{1-t_1-t_2}{6}}\frac{\mathrm{d}t_3}{t_3}
          \int_{t_3}^{\frac{1-t_1-t_2-t_3}{5}}\frac{\mathrm{d}t_4}{t_4}
          \int_{t_4}^{\frac{1-t_1-t_2-t_3-t_4}{4}}\frac{\mathrm{d}t_5}{t_5}
                 \nonumber \\
   & \,\, \times\int_{t_5}^{\frac{1-t_1-t_2-t_3-t_4-t_5}{3}}\frac{\mathrm{d}t_6}{t_6}
          \int_{t_6}^{\frac{1-t_1-t_2-t_3-t_4-t_5-t_6}{2}}\frac{\mathrm{d}t_7}{t_7}
          \int_{t_7}^{1-t_1-t_2-t_3-t_4-t_5-t_6-t_7}
          \frac{x^{(t_1+t_2+\dots+t_8)\gamma}}{t_8}\mathrm{d}t_8.
\end{align}
For the innermost integral in (\ref{X-first-num}), we have
\begin{align}\label{inner-expli}
     & \,\,  \int_{t_7}^{1-t_1-t_2-t_3-t_4-t_5-t_6-t_7}\frac{x^{(t_1+t_2+\dots+t_8)\gamma}}{t_8}\mathrm{d}t_8
                  \nonumber \\
   = & \,\, \frac{1}{\gamma \log x}\int_{t_7}^{1-t_1-t_2-t_3-t_4-t_5-t_6-t_7}\frac{1}{t_8}\mathrm{d}x^{(t_1+t_2+\dots+t_8)\gamma}
                  \nonumber \\
   = & \,\, \frac{1}{\gamma \log x}\bigg(\frac{x^\gamma}{1-t_1-t_2-t_3-t_4-t_5-t_6-t_7}
            +O\bigg(\frac{x^\gamma}{\log x}\bigg)\bigg)
                   \nonumber \\
   = & \,\, \frac{1}{1-t_1-t_2-t_3-t_4-t_5-t_6-t_7}\cdot\frac{x^\gamma}{\gamma\log x}(1+o(1)).
\end{align}
From (\ref{X-expan-nd-upper})--(\ref{inner-expli}), we deduce that
\begin{align}\label{X-compu-num}
          \mathcal{X}
 = & \,\, \frac{x^\gamma(1+o(1))}{\log x}\int_{\frac{1}{17.41}}^{\frac{1}{8}}\frac{\mathrm{d}t_1}{t_1}
          \int_{t_1}^{\frac{1-t_1}{7}}\frac{\mathrm{d}t_2}{t_2}
          \int_{t_2}^{\frac{1-t_1-t_2}{6}}\frac{\mathrm{d}t_3}{t_3}
          \int_{t_3}^{\frac{1-t_1-t_2-t_3}{5}}\frac{\mathrm{d}t_4}{t_4}
          \int_{t_4}^{\frac{1-t_1-t_2-t_3-t_4}{4}}\frac{\mathrm{d}t_5}{t_5}
                   \nonumber \\
   & \,\, \times\int_{t_5}^{\frac{1-t_1-t_2-t_3-t_4-t_5}{3}}\frac{\mathrm{d}t_6}{t_6}
          \int_{t_6}^{\frac{1-t_1-t_2-t_3-t_4-t_5-t_6}{2}}
          \frac{\mathrm{d}t_7}{t_7(1-t_1-t_2-t_3-t_4-t_5-t_6-t_7)}.
\end{align}
Combining (\ref{omega(a)<7-lower}), (\ref{W(a,1/17.41)-lower}), (\ref{E-trans-upper}), (\ref{S(E)-upper-1}) and (\ref{X-compu-num}), we obtain
\begin{align*}
                  \sum_{\substack{a\in\mathscr{A}\\(a,P(x^{1/17.41}))=1\\ \Omega(a)\leqslant7}}\mathscr{W}_a
 \geqslant & \,\,  \frac{2e^{C_0}}{17.41}\pi(x^\gamma)V\big(x^{1/17.41}\big)(1+o(1))
                  \Bigg(\frac{\log\big(17.41\xi-1\big)}{\xi}-
                  \lambda\int_u^{17.41}\frac{t-u}{t(t\xi-1)}\mathrm{d}t
                            \nonumber \\
 & \,\, -\frac{\lambda\gamma}{\xi}\int_{\frac{1}{17.41}}^{\frac{1}{8}}\frac{\mathrm{d}t_1}{t_1}
          \int_{t_1}^{\frac{1-t_1}{7}}\frac{\mathrm{d}t_2}{t_2}
          \int_{t_2}^{\frac{1-t_1-t_2}{6}}\frac{\mathrm{d}t_3}{t_3}
          \int_{t_3}^{\frac{1-t_1-t_2-t_3}{5}}\frac{\mathrm{d}t_4}{t_4}
          \int_{t_4}^{\frac{1-t_1-t_2-t_3-t_4}{4}}\frac{\mathrm{d}t_5}{t_5}
                   \nonumber \\
   & \,\, \times\int_{t_5}^{\frac{1-t_1-t_2-t_3-t_4-t_5}{3}}\frac{\mathrm{d}t_6}{t_6}
          \int_{t_6}^{\frac{1-t_1-t_2-t_3-t_4-t_5-t_6}{2}}
          \frac{\mathrm{d}t_7}{t_7(1-\sum_{j=1}^7t_j)}\Bigg)+O\big(x^{\frac{16.41}{17.41}+\varepsilon}\big).
\end{align*}
By simple numerical calculations, it is easy to see that the number in the above brackets $(\cdot)$ is $\geqslant0.00024867$, provided that $0.989<\gamma<1$. This completes the proof of Theorem \ref{Theorem-1}.

\section*{Acknowledgement}

The authors would like to appreciate the referee for his/her patience in refereeing this paper.
This work is supported by Beijing Natural Science Foundation (Grant No. 1242003), and
the National Natural Science Foundation of China (Grant Nos. 11901566, 12001047, 11971476, 12071238).
Especially, Fei Xue is supported by the China Scholarship Council (Grant No. 202306430075).

\end{document}